\providecommand{\tabularnewline}{\\}
\numberwithin{equation}{section}
\numberwithin{figure}{section}
\theoremstyle{plain}
\newtheorem{thm}{\protect\theoremname}
\theoremstyle{definition}
\newtheorem{defn}[thm]{\protect\definitionname}
\theoremstyle{remark}
\newtheorem*{rem*}{\protect\remarkname}
\theoremstyle{definition}
\newtheorem{example}[thm]{\protect\examplename}
\theoremstyle{plain}
\newtheorem{prop}[thm]{\protect\propositionname}
\theoremstyle{plain}
\newtheorem{cor}[thm]{\protect\corollaryname}
\theoremstyle{plain}
\newtheorem{lem}[thm]{\protect\lemmaname}
 \let\mathscr\relax
\providecommand{\corollaryname}{Corollary}
\providecommand{\definitionname}{Definition}
\providecommand{\examplename}{Example}
\providecommand{\lemmaname}{Lemma}
\providecommand{\propositionname}{Proposition}
\providecommand{\remarkname}{Remark}
\providecommand{\theoremname}{Theorem}
\begin{document}
\title{A proof of the Kotzig--Ringel--Rosa Conjecture}
\author{Edinah K. Gnang}
\begin{abstract}
In graph theory, a graceful labeling of a graph with $m$ edges is
a labeling of its vertices with a subset of the integers ranging from
$0$ to $m$ inclusive, such that no two vertices share a label, and
each edge is uniquely identified by the absolute difference of labels
assigned to its endpoints. The Kotzig--Ringel--Rosa conjecture asserts
that every tree admits a graceful labeling. We provide a proof of
this long standing conjecture via a functional reformulation of the
conjecture and a composition lemma.
\end{abstract}

\maketitle

\section{Introduction}

We say that a graph $G$ admits a decomposition into copies of some
other graph $H$ if the edges of $G$ can be partitioned into edge--disjoint
subgraphs isomorphic to $H$. Graph decomposition problems have a
rich history. In 1847, Kirkman studied decompositions of complete
graphs $K_{n}$ and showed that they can be decomposed into copies
of a triangle if and only if $n$ is congruent to one or three modulo
six. Wilson \cite{wilson1975decompositions} generalized this result
by completely characterizing complete graphs which can be decomposed
into copies of any graph, for large $n$. Graph and hypergraph decomposition
problems form by now a very vast topic with many results and conjectures.
We refer the reader to extensive surveys \cite{wozniak2004packing,yap1988packing,gallsurvey}.
More recently, two breakthrough results obtained by Montgomery, Pokrovskiy,
and Sudakov \cite{MPS21} and independently obtained by Keevash and
Staden \cite{keevash2020ringels} settle asymptotically in the affirmative,
the long standing Ringel conjecture \cite{ringel1963theory} posed
in 1963. The Ringel conjecture asserts that the complete graph $K_{2n-1}$
can be decomposed into $2n-1$ edge disjoint copies of any $n$--vertex
tree. Both proofs constitute major tour de force in the application
of the probabilistic method. Prior to these recent breakthroughs,
the predominant approach to tackling Ringel's conjecture had been
via the much stronger Kotzig--Ringel--Rosa conjecture (or KRR conjecture
for short) which dates back to 1964. The KRR conjecture asserts that
vertices of any $n$--vertex tree $T$ can be labelled injectively
using $n$ consecutive integers, such that the absolute difference
of pairs of vertex labels spanning distinct edges are always distinct.
Such a labeling is called a graceful labeling and the KRR conjecture
is also known as the graceful labeling conjecture. This conjecture
has attracted a lot of attention in the last 50 years but has only
been proved for some special classes of trees, see e.g., \cite{gallsurvey}.
The most general result for this problem was obtained by Adamaszek,
Allen, Grosu, and Hladký \cite{Adamaszek2020AlmostAT} who proved
it asymptotically for trees with maximum degree O($\nicefrac{n}{\log n}$).
The main motivation for studying graceful labelings had been to prove
Ringel\textquoteright s conjecture. Indeed, a graceful labelling map
$f:$ $V(T)\rightarrow\left\{ 0,\cdots,n-1\right\} $, yields an embedding
of $T$ into $\left\{ 0,\cdots,2\left(n-1\right)\right\} $. Using
addition modulo $2n-1$, consider $2n-1$ cyclic shifts $T_{0},...,T_{2\left(n-1\right)}$
of $T$, where the tree $T_{i}$ is an isomorphic copy of $T$ whose
vertices are $V\left(T_{i}\right)=\left\{ f(v)+i\,\bigg|\,v\in V\left(T\right)\right\} $
and whose edges are $E\left(T_{i}\right)=\left\{ \left\{ f\left(x\right)+i,\,f\left(y\right)+i\right\} \bigg|\left\{ x,y\right\} \in E\left(T\right)\right\} $.
It is easy to check that if the map $f$ gracefully labels $T$ then
the trees $T_{i}$ are edge disjoint and therefore cyclically decompose
$K_{2n-1}$. Our main result is a proof of the KRR conjecture using
a functional reformulation of the conjecture and a composition lemma.
\begin{thm}
\label{Graceful_Tree_Theorem} Every tree admits a graceful labeling.
\end{thm}

Our theorem provides the first non--asymptotic result establishing
the existence of a decomposition of $K_{2n-1}$ into any tree on $n$
consecutive\footnote{In the sense of going clockwise or counterclockwise along the unit
circle. The vertex set of $K_{2n-1}$ is identified with $\left(2n-1\right)$-th
roots of unity such that the $k$-th vertex of $K_{2n-1}$ is identified
with $\exp\left(\frac{2\pi k\sqrt{-1}}{2n-1}\right)$.} vertices. Our result does not assume any restriction on the vertex
degrees of the given tree. We describe in Section 2.1 the notation
as well as some auxiliary enumeration results. Starting from section
2.2 we describe technical preliminaries required for the proof of
our main result discussed in section 3.

\section{Preliminaries}

The KRR \cite{ringel1963theory,rosa1966certain} conjecture, also
known as the \emph{Graceful Labeling Conjecture }(GLC), asserts that
every tree admits a graceful labeling. For a detailed survey of the
extensive literature on this problem, see \cite{gallsurvey}. For
the purposes of our discussion, we redefine graceful labelings of
digraphs as vertex labelings which results in a bijection between
vertex labels and \emph{induced absolute subtractive edge labels}.
For notational convenience, let $\mathbb{Z}_{n}$ denote the set formed
by the smallest $n$ consecutive non--negative integers i.e.
\[
\mathbb{Z}_{n}:=\left[0,n\right)\cap\mathbb{Z}.
\]
The present discussion is based upon a functional reformulation of
the GLC which exploits properties of the transformation monoid $\mathbb{Z}_{n}^{\mathbb{Z}_{n}}$
i.e. the monoid formed by functions having $\mathbb{Z}_{n}$ both
as their domain and codomain. The binary operation of the monoid is
the function composition operation.

\subsection{Functional formulation}

A rooted tree on $n>0$ vertices is associated with a function
\begin{equation}
f\in\mathbb{Z}_{n}^{\mathbb{Z}_{n}}\:\text{ subject to }\:\left|f^{(n-1)}\left(\mathbb{Z}_{n}\right)\right|=1,\label{functional_reformulation}
\end{equation}
\[
\text{where}
\]
\[
\forall\,i\in\mathbb{Z}_{n},\;f^{(0)}\left(i\right)\,:=i\mbox{ and }\forall\,k\ge0,\;f^{\left(k+1\right)}=f^{\left(k\right)}\circ f=f\circ f^{\left(k\right)}.
\]
In other words the function $f$ has a unique fixed point (the root)
which is attractive over its domain.
\begin{defn}
To an arbitrary function $f\in\mathbb{Z}_{n}^{\mathbb{Z}_{n}}$ we
associate a \emph{functional directed graph }denoted \emph{$G_{f}$}
whose vertex set, and directed edge set are respectively
\[
V\left(G_{f}\right)=\mathbb{Z}_{n},\quad E\left(G_{f}\right)=\left\{ \left(i,f\left(i\right)\right)\,:\,i\in\mathbb{Z}_{n}\right\} .
\]
See Figure 2.1 for an example.
\end{defn}

\begin{rem*}
The automorphism group of $G_{f}$ is denoted Aut$\left(G_{f}\right)$
and defined such that 
\[
\text{Aut}\left(G_{f}\right)=\left\{ \sigma\in\mathbb{Z}_{n}^{\mathbb{Z}_{n}}\,:\,\sigma f\sigma^{\left(-1\right)}=f\right\} .
\]
\end{rem*}
\begin{defn}
Connected components of $G_{f}$ partition the vertex set into equivalence
classes prescribed by an equivalence relation. A vertex pair $\left(i,j\right)\in\mathbb{Z}_{n}\times\mathbb{Z}_{n}$
lies in the same connected component of $G_{f}$ if there exist non-negative
integers $u$, $v$ such that 
\[
f^{\left(u\right)}\left(i\right)=f^{\left(v\right)}\left(j\right).
\]
\end{defn}

We denote by $G_{f^{\top}}$ the directed graph obtained by reversing
the orientation of every edge in $G_{f}$. When $f$ is not bijective,
the directed graph $G_{f^{\top}}$ is not a functional directed graph
since some of its vertices have out--degree $\ne1$. When $f\in\mathbb{Z}_{n}^{\mathbb{Z}_{n}}$
is subject to the fixed point condition $\left|f^{(n-1)}\left(\mathbb{Z}_{n}\right)\right|=1$,
the graph $G_{f}$ is a rooted, directed and $\mathbb{Z}_{n}$--spanning
functional tree or a functional tree for short.
\begin{defn}
Let $G_{f}$ denote the functional directed graph of $f\in\mathbb{Z}_{n}^{\mathbb{Z}_{n}}$.
Induced \emph{subtractive edge labels} of $G_{f}$ correspond to integers
occurring in the sequence $\left(f\left(i\right)-i:i\in\mathbb{Z}_{n}\right)$.
The $i$--th member of the sequence equal to $f\left(i\right)-i$
is the induced subtractive edge label of the directed edge $\left(i,f\left(i\right)\right)\in E\left(G_{f}\right)$.
In other words the set of induced subtractive edge labels of $G_{f}$
is
\[
\left\{ f\left(i\right)-i\,:\,i\in\mathbb{Z}_{n}\right\} .
\]
Induced absolute subtractive edge labels of $G_{f}$ correspond to
absolute values of induced subtractive edge labels :
\[
\left\{ \left|f\left(i\right)-i\right|\,:\,i\in\mathbb{Z}_{n}\right\} .
\]
\end{defn}

\begin{defn}
The functional directed graph $G_{f}$ of $f\in\mathbb{Z}_{n}^{\mathbb{Z}_{n}}$
is \textit{graceful} if there exist a bijection $\sigma\in\text{S}_{n}\subset\mathbb{Z}_{n}^{\mathbb{Z}_{n}}$
such that
\begin{equation}
\left\{ \left|\sigma f\left(i\right)-\sigma\left(i\right)\right|\,:\,i\in\mathbb{Z}_{n}\right\} =\mathbb{Z}_{n}.\label{Grace}
\end{equation}
 Otherwise when no such bijection $\sigma$ exist, the functional
directed graph $G_{f}$ is \textit{ungraceful}. Finally, if $\sigma$
can be chosen to be the identity permutation (denoted id), then $G_{f}$
is \textit{gracefully labeled}.
\end{defn}

Note that when $G_{f}$ is gracefully labeled, the set of induced
subtractive edge labels of the bi--directed loop-graph $G_{f^{\top}}\cup G_{f}$
is equal to $-\mathbb{Z}_{n}\cup\mathbb{Z}_{n}$. For instance, the
graph $G_{f}$ of  the function $f\in\mathbb{Z}_{6}^{\mathbb{Z}_{6}}$
defined by

\[
f\left(0\right)=0,\,f\left(1\right)=3,\,f\left(2\right)=3,\,f\left(3\right)=0,\,f\left(4\right)=0,\,f\left(5\right)=0.
\]
 depicted in Figure 2.1 is a gracefully labeled functional tree.

\begin{figure}
\begin{tikzpicture}
	\node (1) at (-0.5,1) {4};
	\node (2) at (-0.5,-1) {5};
	\node (0) at (1,0) {0};
	\node (3) at (2.5,0) {3};
	\node (4) at (4,1) {1};
	\node (5) at (4,-1) {2};
	
	\foreach \x/\y in {1/0,2/0,3/0,4/3,5/3} {
		\draw[thick,->] (\x)--(\y);
	}
	\draw[thick,->] (0) edge [out=105,in=50,looseness=5] (0);
\end{tikzpicture} \centering \caption{$f\left(0\right)=0,\,f\left(1\right)=3,\,f\left(2\right)=3,\,f\left(3\right)=0,\,f\left(4\right)=0,\,f\left(5\right)=0$}
\end{figure}
The edge set of $G_{f}$ is 
\[
E(G_{f})=\left\{ \left(0,0\right),\,\left(1,3\right),\,\left(2,\,3\right),\,\left(3,\,0\right),\,\left(4,0\right),\,\left(5,0\right)\right\} .
\]
More generally, the sequence of $\tau$--\textit{induced edge labels}
of the functional directed graph $G_{f}$ where $f\in\mathbb{Z}_{n}^{\mathbb{Z}_{n}}$
are defined with respect to an arbitrary $\tau\in\mathbb{Z}_{n}^{\mathbb{Z}_{n}\times\mathbb{Z}_{n}}$,
as 
\[
\left(\tau\left(i,f\left(i\right)\right)\,:\,i\in\mathbb{Z}_{n}\right).
\]
For a given $\tau\in\mathbb{Z}_{n}^{\mathbb{Z}_{n}\times\mathbb{Z}_{n}}$,
the digraph $G_{f}$ of $f\in\mathbb{Z}_{n}^{\mathbb{Z}_{n}}$ is
$\tau$--\textit{Zen} if there exist a bijection $\sigma\in\mathbb{Z}_{n}^{\mathbb{Z}_{n}}$
such that
\[
\left\{ \tau\left(\sigma\left(i\right),\sigma f\left(i\right)\right)\,:\,i\in\mathbb{Z}_{n}\right\} =\mathbb{Z}_{n}.
\]
In particular, if $\tau$ is chosen such that 
\[
\tau\left(i,j\right)=\left|j-i\right|,\:\forall\,\left(i,j\right)\in\mathbb{Z}_{n}\times\mathbb{Z}_{n},
\]
then $\tau$--Zen graphs are graceful graphs and vice versa. Let
S$_{n}\subset\mathbb{Z}_{n}^{\mathbb{Z}_{n}}$ denote the symmetric
group acting on members of $\mathbb{Z}_{n}$ in other words S$_{n}$
denotes the subset of all bijective functions in $\mathbb{Z}_{n}^{\mathbb{Z}_{n}}$.
The following graceful expansion theorem describes a necessary and
sufficient condition on $f\in\mathbb{Z}_{n}^{\mathbb{Z}_{n}}$ to
ensures that $G_{f}$ is graceful.
\begin{thm}[Graceful Expansion Theorem]
\label{Graceful_Expansion_Theorem} Let id $\in$ S$_{n}$ denote
the identity element and let $\varphi$ denote the involution $\left(n-1-\text{id}\right)\in$
S$_{n}$. The graph $G_{f}$ of $f\in\mathbb{Z}_{n}^{\mathbb{Z}_{n}}$
is graceful if and only if there exist a nonempty permutation subset
$\mathcal{G}_{f}\subset\mbox{S}_{n}$ as well as a corresponding sign
function $\mathfrak{s}_{f}\in\left\{ -1,0,1\right\} ^{\mathcal{G}_{f}\times\mathbb{Z}_{n}}$
such that
\begin{equation}
f\left(i\right)=\sigma_{\gamma}^{\left(-1\right)}\varphi^{\left(t\right)}\left(\varphi^{\left(t\right)}\sigma_{\gamma}\left(i\right)+\left(-1\right)^{t}\cdot\mathfrak{s}_{f}\left(\gamma,\sigma_{\gamma}\left(i\right)\right)\cdot\gamma\sigma_{\gamma}\left(i\right)\right),\quad\forall\,i\in\mathbb{Z}_{n},\,\gamma\in\mathcal{G}_{f}\,\text{ and }\,t\in\left\{ 0,1\right\} ,\label{Graceful Expansion}
\end{equation}
for some $\sigma_{\gamma}\in\text{S}_{n}$.
\end{thm}

\begin{proof}
We prove the claim by starting from the premise that $G_{f}$ is graceful.
We then derive the graceful expansion of $f$ described in equation
(\ref{Graceful Expansion}) via a sequence of reversible steps. Thereby
establishing both the forward and the backward claim. Recall that
the premise that $G_{f}$ is graceful is equivalent to the assertion
that there exist a permutation representative $\sigma$ of some coset
of Aut$\left(G_{f}\right)$ for which we have 
\[
\left\{ \left|\sigma f\left(j\right)-\sigma\left(j\right)\right|:j\in\mathbb{Z}_{n}\right\} =\mathbb{Z}_{n}.
\]
Thus establishing the existence of bijective map $\gamma$ from vertex
labels to induced absolute subtractive edge labels:
\[
\gamma\left(i\right)=\left|\sigma f\sigma^{-1}\left(i\right)-i\right|,\:\forall\,i\in\mathbb{Z}_{n}.
\]
Choosing $\sigma$ from distinct cosets of Aut$\left(G_{f}\right)$
subject to equation (\ref{Grace}) may result in distinct permutations
$\gamma$. We emphasize the dependence of the permutation $\sigma$
on the permutation $\gamma$ by writing $\sigma_{\gamma}$.
\[
\left|\sigma_{\gamma}f\sigma_{\gamma}^{-1}\left(i\right)-i\right|=\gamma\left(i\right),\:\forall\,i\in\mathbb{Z}_{n}.
\]
Accounting for the involution symmetry, we write: for all $i\in\mathbb{Z}_{n}$
and $t\in\left\{ 0,1\right\} $ 
\[
\left|\varphi^{\left(t\right)}\sigma_{\gamma}f\sigma_{\gamma}^{-1}\left(i\right)-\varphi^{\left(t\right)}\left(i\right)\right|=\gamma\left(i\right),\:\forall\,\begin{array}{c}
i\in\mathbb{Z}_{n}\\
t\in\left\{ 0,1\right\} 
\end{array}.
\]
Removing the absolute value on the left hand side of the equation
immediately above introduces the sign function $\mathfrak{s}_{f}\in\left\{ -1,0,1\right\} ^{\mathcal{G}_{f}\times\mathbb{Z}_{n}}$
on the right hand side. We write
\[
\begin{array}{cccc}
 & \left(\varphi^{\left(t\right)}\sigma_{\gamma}f\sigma_{\gamma}^{-1}\left(i\right)-\varphi^{\left(t\right)}\left(i\right)\right) & = & \left(-1\right)^{t}\cdot\mathfrak{s}_{f}\left(\gamma,\,i\right)\cdot\gamma\left(i\right)\\
\\
\Longleftrightarrow & \varphi^{\left(t\right)}\sigma_{\gamma}f\sigma_{\gamma}^{-1}\left(i\right) & = & \varphi^{\left(t\right)}\left(i\right)+\left(-1\right)^{t}\cdot\mathfrak{s}_{f}\left(\gamma,\,i\right)\cdot\gamma\left(i\right)\\
\\
\Longleftrightarrow & f\left(i\right) & = & \sigma_{\gamma}^{\left(-1\right)}\varphi^{\left(t\right)}\left(\varphi^{\left(t\right)}\sigma_{\gamma}\left(i\right)+\left(-1\right)^{t}\cdot\mathfrak{s}_{f}\left(\gamma,\sigma_{\gamma}\left(i\right)\right)\cdot\gamma\sigma_{\gamma}\left(i\right)\right)
\end{array}
\]
as claimed.
\end{proof}
In equation (\ref{Graceful Expansion}), the bijection $\gamma$ which
maps vertex labels to induced absolute subtractive edge labels is
called a \textit{permutation basis} of the graceful expansion. For
each $i\in\mathbb{Z}_{n}$, the integer $\gamma\left(i\right)$ is
the induced absolute subtractive edge label of the directed edge emanating
from vertex $i$ in the gracefully labeled graph $G_{\sigma_{\gamma}f\sigma_{\gamma}^{-1}}$.
The parameter $t\in\left\{ 0,1\right\} $ in the graceful expansion
of $f$ described in equation (\ref{Graceful Expansion}) accounts
for the complementary labeling symmetry expressed by the equality
\begin{equation}
\left(\varphi\sigma_{\gamma}f\sigma_{\gamma}^{-1}\varphi^{-1}\left(i\right)-i\right)=\left(-1\right)\left(\sigma_{\gamma}f\sigma_{\gamma}^{-1}\left(i\right)-i\right),\:\forall\,i\in\mathbb{Z}_{n}.\label{Complementary Symmetry}
\end{equation}

\begin{example}
Consider the function
\[
f\in\mathbb{Z}_{4}^{\mathbb{Z}_{4}}\;\text{s.t.}\;f\left(i\right)=\begin{cases}
\begin{array}{cc}
0 & \text{if }i=0\\
i-1 & \text{otherwise}
\end{array}\forall\,i\in\mathbb{Z}_{4},\end{cases}
\]
\[
\varphi\left(i\right)=3-i,\;\forall\,i\in\mathbb{Z}_{4}.
\]
Let $\mathcal{G}_{f}\subset\mbox{S}_{4}$ be such that 
\[
\mathcal{G}_{f}=\left\{ \gamma,\gamma^{\prime}\right\} \;\text{such that}\;\begin{array}{ccc}
\gamma\left(0\right) & = & 0\\
\gamma\left(1\right) & = & 2\\
\gamma\left(2\right) & = & 1\\
\gamma\left(3\right) & = & 3
\end{array}\:\text{ and }\:\begin{array}{ccc}
\gamma^{\prime}\left(0\right) & = & 3\\
\gamma^{\prime}\left(1\right) & = & 1\\
\gamma^{\prime}\left(2\right) & = & 0\\
\gamma^{\prime}\left(3\right) & = & 2
\end{array},
\]
the corresponding sign assignments are specified by
\[
\mathfrak{s}_{f}:\mathcal{G}_{f}\times\mathbb{Z}_{4}\rightarrow\left\{ -1,0,1\right\} \;\text{such that}\;\begin{array}{ccc}
\mathfrak{s}_{f}\left(\gamma,0\right) & = & 0\\
\mathfrak{s}_{f}\left(\gamma,1\right) & = & 1\\
\mathfrak{s}_{f}\left(\gamma,2\right) & = & -1\\
\mathfrak{s}_{f}\left(\gamma,3\right) & = & -1
\end{array}\:\text{ and }\:\begin{array}{ccc}
\mathfrak{s}_{f}\left(\gamma^{\prime},0\right) & = & 1\\
\mathfrak{s}_{f}\left(\gamma^{\prime},1\right) & = & 1\\
\mathfrak{s}_{f}\left(\gamma^{\prime},2\right) & = & 0\\
\mathfrak{s}_{f}\left(\gamma^{\prime},3\right) & = & -1
\end{array}.
\]
Finally representatives for distinct cosets of Aut$\left(G_{f}\right)$
are $\sigma_{\gamma}$ and $\sigma_{\gamma^{\prime}}$ defined such
that
\[
\begin{array}{ccc}
\sigma_{\gamma}\left(0\right) & = & 0\\
\sigma_{\gamma}\left(1\right) & = & 3\\
\sigma_{\gamma}\left(2\right) & = & 1\\
\sigma_{\gamma}\left(3\right) & = & 2
\end{array}\:\text{ and }\:\begin{array}{ccc}
\sigma_{\gamma^{\prime}}\left(0\right) & = & 2\\
\sigma_{\gamma^{\prime}}\left(1\right) & = & 1\\
\sigma_{\gamma^{\prime}}\left(2\right) & = & 3\\
\sigma_{\gamma^{\prime}}\left(3\right) & = & 0
\end{array}.
\]
We easily check the validity of the only two possible graceful expansions
of $f$ prescribed with respect to permutation bases $\gamma$ and
$\gamma^{\prime}$ defined for all $t\in\left\{ 0,1\right\} $ and
for all $i\in\mathbb{Z}_{4}$ by 
\[
\sigma_{\gamma}^{\left(-1\right)}\varphi^{\left(t\right)}\left(\varphi^{\left(t\right)}\sigma_{\gamma}\left(i\right)+\left(-1\right)^{t}\cdot\mathfrak{s}_{f}\left(\gamma,\sigma_{\gamma}\left(i\right)\right)\cdot\gamma\sigma_{\gamma}\left(i\right)\right)=f\left(i\right)=\sigma_{\gamma^{\prime}}^{\left(-1\right)}\varphi^{\left(t\right)}\left(\varphi^{\left(t\right)}\sigma_{\gamma^{\prime}}\left(i\right)+\left(-1\right)^{t}\cdot\mathfrak{s}_{f}\left(\gamma^{\prime},\sigma_{\gamma^{\prime}}\left(i\right)\right)\cdot\gamma^{\prime}\sigma_{\gamma^{\prime}}\left(i\right)\right).
\]
\end{example}

\begin{defn}
Functional directed graphs $G_{f}$, $G_{g}$ of $f,g\in\mathbb{Z}_{n}^{\mathbb{Z}_{n}}$,
differ from one another by\emph{ fixed point swaps} if
\[
\left\{ \left\{ i,f\left(i\right)\right\} :\begin{array}{c}
i\in\mathbb{Z}_{n}\\
i\ne f\left(i\right)
\end{array}\right\} =\left\{ \left\{ i,g\left(i\right)\right\} :\begin{array}{c}
i\in\mathbb{Z}_{n}\\
i\ne g\left(i\right)
\end{array}\right\} .
\]
\end{defn}

Let non--isomorphic graphs $G_{f}$ and $G_{g}$ be both connected
and graceful. If $G_{f}$ differs from $G_{g}$ by swapping fixed
points, then we devise from distinct graceful expansions of $g$ distinct
graceful expansions of $f$ and vice versa. Incidentally, the set
of permutation bases for graceful expansions of $f$ bijectively maps
onto the set of permutation bases for graceful expansions of $g$.
In particular, every graceful relabeling of a connected graceful graph
$G_{f}$ admits a unique gracefully labeled swapped fixed point counterpart
whose loop edge is located at the vertex labeled $0$. Consequently,
to characterize permutation bases of functions whose functional digraphs
have no single vertex component, it suffices to characterize permutation
bases which fix $0$. It is easy to see that a permutation $\gamma$
subject to $\gamma\left(0\right)=0$, is a permutation basis for some
graceful expansion if and only if 
\begin{equation}
\forall\,i\in\mathbb{Z}_{n}\backslash\left\{ 0\right\} ,\ \left(\left\{ i-\gamma\left(i\right),i+\gamma\left(i\right)\right\} \cap\mathbb{Z}_{n}\right)\ne\emptyset\Leftrightarrow\begin{cases}
\begin{array}{c}
\gamma\left(i\right)\le i\\
\mbox{ or }\\
\gamma\left(i\right)\le\left(n-1\right)-i
\end{array}\forall\,i\in\mathbb{Z}_{n}\end{cases}.\label{permutation_basis}
\end{equation}
Note that given such a basis $\gamma$ it is possible that for an
input $i\in\mathbb{Z}_{n}$, both conditions 
\[
\gamma\left(i\right)\le i\:\text{ and }\:\gamma\left(i\right)\le\left(n-1\right)-i
\]
simultaneously hold. In that case $\gamma$ is a permutation basis
for two or more functions in $\mathbb{Z}_{n}^{\mathbb{Z}_{n}}$. We
now state and prove a result which characterizes permutation bases
of graceful expansions for members of $\mathbb{Z}_{n}^{\mathbb{Z}_{n}}$
whose graphs have no single vertex component when $n>2$.
\begin{thm}
There are exactly $\left(\left\lfloor \frac{n-1}{2}\right\rfloor !\right)\left(\left\lceil \frac{n-1}{2}\right\rceil !\right)$
distinct permutations which fix $0$, and occur as permutation bases
for graceful expansions of members of $\mathbb{Z}_{n}^{\mathbb{Z}_{n}}$.
\end{thm}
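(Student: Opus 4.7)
The plan is to invoke the characterization of permutation bases fixing $0$ provided by Eq.~(\ref{permutation_basis}): such a permutation is a basis precisely when $\gamma(i)\le b(i):=\max\bigl(i,(n-1)-i\bigr)$ for every $i\in\mathbb{Z}_{n}\setminus\{0\}$ (the disjunction $\gamma(i)\le i$ or $\gamma(i)\le(n-1)-i$ is exactly $\gamma(i)\le\max(i,(n-1)-i)$). Since $\gamma(0)=0$ is already prescribed, the problem reduces to counting bijections $\gamma\colon\{1,\dots,n-1\}\to\{1,\dots,n-1\}$ satisfying the coordinatewise upper bound $\gamma(i)\le b(i)$.

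I would count these bijections by assigning the values $v=n-1,n-2,\dots,1$ to positions one at a time, in that decreasing order. At the step for $v$, the admissible positions form the set
\[
A_{v}\;:=\;\{i\in\{1,\dots,n-1\}:b(i)\ge v\}\;=\;\{1,\dots,n-1-v\}\cup\{v,\dots,n-1\}.
\]
The central observation, which makes this greedy count exact, is a monotonicity property: any previously used position $i$ carries some value $v'>v$, hence satisfies $b(i)\ge v'>v$ and so still lies in $A_{v}$. Therefore the number of choices for $v$ is $|A_{v}|-(n-1-v)$, independent of the history, and the total equals the product of these numbers over $v\in\{1,\dots,n-1\}$.

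To finish, I would evaluate $|A_{v}|$ in two regimes. For $v\le\lceil(n-1)/2\rceil$ the two intervals defining $A_{v}$ meet or overlap at the midpoint, so $A_{v}=\{1,\dots,n-1\}$ and the number of free positions is $v$. For $v>\lceil(n-1)/2\rceil$ the two intervals are disjoint, so $|A_{v}|=(n-1-v)+(n-v)=2(n-v)-1$, and the number of free positions is $n-v$. Multiplying the per-step counts yields
\[
\prod_{v=1}^{\lceil(n-1)/2\rceil}v\;\cdot\;\prod_{v=\lceil(n-1)/2\rceil+1}^{n-1}(n-v)\;=\;\bigl\lceil(n-1)/2\bigr\rceil!\,\cdot\,\bigl\lfloor(n-1)/2\bigr\rfloor!,
\]
which is the asserted count. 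The only delicate point is the boundary value $v=\lceil(n-1)/2\rceil$, where the positions of the two regimes depend on the parity of $n$; however both formulas agree there (each produces $\lceil(n-1)/2\rceil$), so no separate parity case split is required, and the monotonicity observation of the previous paragraph is the sole piece of real content in the argument.
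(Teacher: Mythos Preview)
Your proof is correct and uses essentially the same greedy strategy as the paper: assign the values $n-1,n-2,\dots,1$ to positions in decreasing order and count the admissible positions remaining at each step. Your presentation is somewhat cleaner in that it works directly from the bound $\gamma(i)\le\max\bigl(i,(n-1)-i\bigr)$ and makes the key monotonicity observation (previously used positions automatically lie in $A_v$) explicit, whereas the paper carries the sign-function notation through the row-addition setup and leaves that step implicit.
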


The proof of this claim follows as a corollary of theorem (\ref{Graceful_Expansion_Theorem}).
Let $f\in\mathbb{Z}_{n}^{\mathbb{Z}_{n}}$ be subject to $f\left(0\right)=0$
and such that $G_{f}$ is already gracefully labeled thereby simplifying
the graceful expansion to the setting where $\sigma_{\gamma}=$ id
(the identity permutation). Our argument focuses on the row addition
setup described below where $t\in\left\{ 0,1\right\} $. The setup
stems from the graceful expansion theorem\\

\begin{center}
\begin{tabular}{rr@{\extracolsep{0pt}.}lr@{\extracolsep{0pt}.}lr@{\extracolsep{0pt}.}lr@{\extracolsep{0pt}.}lr@{\extracolsep{0pt}.}lr@{\extracolsep{0pt}.}lr@{\extracolsep{0pt}.}l}
 & \multicolumn{2}{c}{$\varphi^{\left(t\right)}\left(0\right)$} & \multicolumn{2}{c}{$\cdots$} & \multicolumn{2}{c}{$\varphi^{\left(t\right)}\left(i\right)$} & \multicolumn{2}{c}{$\cdots$} & \multicolumn{2}{c}{$\varphi^{\left(t\right)}\left(n-1\right)$} & \multicolumn{2}{c}{} & \multicolumn{2}{c}{Row 1}\tabularnewline
$+$ & \multicolumn{2}{c}{} & \multicolumn{2}{c}{} & \multicolumn{2}{c}{} & \multicolumn{2}{c}{} & \multicolumn{2}{c}{} & \multicolumn{2}{c}{} & \multicolumn{2}{c}{}\tabularnewline
\multirow{1}{*}{} & \multicolumn{2}{c}{\multirow{1}{*}{$\left(-1\right)^{t}\cdot\mathfrak{s}\left(\gamma,0\right)\cdot\gamma\left(0\right)$}} & \multicolumn{2}{c}{$\cdots$} & \multicolumn{2}{c}{$\left(-1\right)^{t}\cdot\mathfrak{s}\left(\gamma,i\right)\cdot\gamma\left(i\right)$} & \multicolumn{2}{c}{$\cdots$} & \multicolumn{2}{c}{$\left(-1\right)^{t}\cdot\mathfrak{s}\left(\gamma,n-1\right)\cdot\gamma\left(n-1\right)$} & \multicolumn{2}{c}{} & \multicolumn{2}{c}{Row 2}\tabularnewline
\midrule
\midrule 
$=$ & \multicolumn{2}{c}{$\varphi^{\left(t\right)}f\left(0\right)$} & \multicolumn{2}{c}{$\cdots$} & \multicolumn{2}{c}{$\varphi^{\left(t\right)}f\left(i\right)$} & \multicolumn{2}{c}{$\cdots$} & \multicolumn{2}{c}{$\varphi^{\left(t\right)}f\left(n-1\right)$} & \multicolumn{2}{c}{} & \multicolumn{2}{c}{Row 1 + Row 2}\tabularnewline
 & \multicolumn{2}{c}{} & \multicolumn{2}{c}{} & \multicolumn{2}{c}{} & \multicolumn{2}{c}{} & \multicolumn{2}{c}{} & \multicolumn{2}{c}{} & \multicolumn{2}{c}{}\tabularnewline
\end{tabular}
\par\end{center}
\begin{proof}
For a permutation basis $\gamma$ which fixes $0$, observe that $f\left(0\right)=0\Leftrightarrow\mathfrak{s}\left(\gamma,0\right)=0$.
This implies that $f\left(n-1\right)=0$ and $G_{f}$ has no isolated
vertex component. In a graceful expansion whose permutation basis
$\gamma$ fixes $0$, there is a unique choice for $\mathfrak{s}_{f}\left(\gamma,n-1\right)\cdot\gamma\left(n-1\right)$.
That choice is 
\[
\mathfrak{s}_{f}\left(\gamma,n-1\right)\cdot\gamma\left(n-1\right)=-\left(n-1\right)\Longleftrightarrow f\left(n-1\right)=0.
\]
Following this assignment, there are two mutually exclusive choices
for a column entry on the second row (of the row addition setup) whose
absolute value equals $\left(n-2\right)$. These mutually exclusive
choices yield corresponding mutually exclusive assignments
\begin{equation}
\begin{cases}
\begin{array}{ccccc}
\mathfrak{s}_{f}\left(\gamma,1\right)\cdot\gamma\left(1\right) & = & n-2 & \Longleftrightarrow & f\left(1\right)=n-1\\
 & \text{or}\\
\mathfrak{s}_{f}\left(\gamma,n-2\right)\cdot\gamma\left(n-2\right) & = & -\left(n-2\right) & \Longleftrightarrow & f\left(n-2\right)=0
\end{array}\end{cases}.\label{Choices}
\end{equation}
Following previous assignments (accounting thus far for column entries
on the second row whose magnitudes are respectively $\left(n-1\right)$
and $\left(n-2\right)$), there are three mutually exclusive choices
for the column entry on the second row whose magnitude is $\left(n-3\right)$.
Mutually exclusive choices ( not accounting for the choice made for
the entry whose magnitude is equal to $\left(n-2\right)$ ) yield
mutually exclusive assignments
\begin{equation}
\begin{cases}
\begin{array}{ccccc}
\mathfrak{s}_{f}\left(\gamma,1\right)\cdot\gamma\left(1\right) & = & n-3 & \Longleftrightarrow & f\left(1\right)=n-2\\
 & \text{or}\\
\mathfrak{s}_{f}\left(\gamma,2\right)\cdot\gamma\left(2\right) & = & n-3 & \Longleftrightarrow & f\left(2\right)=n-1\\
 & \text{or}\\
\mathfrak{s}_{f}\left(\gamma,n-3\right)\cdot\gamma\left(n-3\right) & = & -\left(n-3\right) & \Longleftrightarrow & f\left(n-3\right)=0\\
 & \text{or}\\
\mathfrak{s}_{f}\left(\gamma,n-2\right)\cdot\gamma\left(n-2\right) & = & -\left(n-3\right) & \Longleftrightarrow & f\left(n-2\right)=1
\end{array}\end{cases}.\label{Choices2}
\end{equation}
However either the first or the last assignment displayed in Eq. (\ref{Choices2})
is not possible per the previous choice made for the column entry
of the second row whose magnitude equals $\left(n-2\right)$ as described
in Eq. (\ref{Choices}). Therefore there are three mutually exclusive
choices left for the column entry in the second row whose magnitude
equals $\left(n-3\right)$. Similarly, following the third assignment
(accounting thus far for the three choices made for column entries
of the second row of magnitudes $\left(n-1\right)$, $\left(n-2\right)$
and $\left(n-3\right)$), there are four mutually exclusive choices
for a column entry of the second row whose magnitude equals $\left(n-4\right)$.
Mutually exclusive choices for the entry of the second row with magnitude
$\left(n-4\right)$ (not accounting for the two previously made choices
for entries of the second row having magnitudes $\left(n-2\right)$
and $\left(n-3\right)$ ) yield mutually exclusive assignments 
\begin{equation}
\begin{cases}
\begin{array}{ccccc}
\mathfrak{s}_{f}\left(\gamma,1\right)\cdot\gamma\left(1\right) & = & n-4 & \Longleftrightarrow & f\left(1\right)=n-3\\
 & \text{or}\\
\mathfrak{s}_{f}\left(\gamma,2\right)\cdot\gamma\left(2\right) & = & n-4 & \Longleftrightarrow & f\left(2\right)=n-2\\
 & \text{or}\\
\mathfrak{s}_{f}\left(\gamma,3\right)\cdot\gamma\left(3\right) & = & n-4 & \Longleftrightarrow & f\left(3\right)=n-1\\
 & \text{or}\\
\mathfrak{s}_{f}\left(\gamma,n-4\right)\cdot\gamma\left(n-4\right) & = & -\left(n-4\right) & \Longleftrightarrow & f\left(n-4\right)=0\\
 & \text{or}\\
\mathfrak{s}_{f}\left(\gamma,n-3\right)\cdot\gamma\left(n-3\right) & = & -\left(n-4\right) & \Longleftrightarrow & f\left(n-3\right)=1\\
 & \text{or}\\
\mathfrak{s}_{f}\left(\gamma,n-2\right)\cdot\gamma\left(n-2\right) & = & -\left(n-4\right) & \Longleftrightarrow & f\left(n-2\right)=2
\end{array}\end{cases}.\label{choice3}
\end{equation}
Two of the six possible assignments described in equation (\ref{choice3})
are not possible per the previous assignments made for column entries
of magnitudes $\left(n-2\right)$ and $\left(n-3\right)$ as described
by equation (\ref{Choices}) and equation (\ref{Choices2}). Therefore
there are four mutually exclusive choices left for the column entry
in the second row whose magnitude equals $\left(n-4\right)$. The
argument proceeds accordingly in a similar vein all the way down to
mutually exclusive choices for the column entry in the second row
whose magnitude equals $\left\lceil \frac{n-1}{2}\right\rceil $.
These choices for the partial assignment accounts for the $\left\lfloor \frac{n-1}{2}\right\rfloor !$
factor in the claim. Note that for each choice made in this partial
assignment, thus far the corresponding output of the sign function
is uniquely determined. The remaining unassigned integers whose magnitudes
ranges from $1$ to $\left\lfloor \frac{n-1}{2}\right\rfloor $ can
be arbitrarily permuted among the remaining unassigned column entries
of the second row. Thus accounting for the remaining $\left\lceil \frac{n-1}{2}\right\rceil !$
factor in the claim.
\end{proof}
The proof argument establishes that a permutation $\gamma$ which
fixes $0$, can be a permutation basis for at most $2^{\left\lceil \frac{n-1}{2}\right\rceil }$
distinct members of $\mathbb{Z}_{n}^{\mathbb{Z}_{n}}$ whose graphs
are gracefully labeled. This upper bound is sharp when the permutation
basis is set to $\gamma=$ id.
\begin{defn}
GrL$\left(G_{f}\right)$ denotes the largest subset of distinct gracefully
labeled functional directed graphs isomorphic to $G_{f}$. More formally
we write
\[
\text{GrL}\left(G_{f}\right)\,:=\left\{ G_{\theta f\theta^{-1}}:\begin{array}{c}
\theta\text{ is a representative of a coset in }\nicefrac{\text{S}_{n}}{\text{Aut}\left(G_{f}\right)}\text{ and }\\
\mathbb{Z}_{n}=\left\{ \left|\theta f\theta^{-1}\left(i\right)-i\right|\,:\,i\in\mathbb{Z}_{n}\right\} 
\end{array}\right\} 
\]
\end{defn}

Theorem (\ref{Graceful_Expansion_Theorem}) yields a toy model illustration
of the\emph{ composition lemma}. We discuss the composition lemma
in more detail shortly. For now it suffices to say that the idea of
the composition lemma is to relate graceful expansions of $f$ to
graceful expansions of some non--trivial iterate of $f$. As illustration,
consider the setting where $G_{f}$ is graceful and $f\in$ S$_{n}\subset\mathbb{Z}_{n}^{\mathbb{Z}_{n}}$
i.e. $f$ is bijective.
\begin{prop}
\label{Cycle_Orrientation_Proposition}Let $f\in$ S$_{n}$ and let
$o_{f}$ denote the order of $f$ i.e. the LCM of cycle lengths occurring
in $G_{f}$. The iterate $f^{\left(\text{o}_{f}-1\right)}$ admits
a graceful expansion if and only if the original function $f$ admits
a graceful expansion.
\end{prop}

\begin{proof}
We proceed from the premise that $f$ admits a graceful expansion
and derive via a sequence of reversible steps a graceful expansion
for $f^{\left(\text{o}_{f}-1\right)}$. Assume without loss of generality
that $G_{f}$ is gracefully labeled thereby simplifying the graceful
expansion to the setting where $\sigma_{\gamma}=$ id. By theorem
\ref{Graceful_Expansion_Theorem}, the graceful expansion of $f$
is of the form
\[
f\left(i\right)=\varphi^{\left(t\right)}\left(\varphi^{\left(t\right)}\left(i\right)+\left(-1\right)^{t}\cdot\mathfrak{s}_{f}\left(\gamma,i\right)\cdot\gamma\left(i\right)\right),\:\forall\:i\in\mathbb{Z}_{n}\text{ and }t\in\left\{ 0,1\right\} .
\]
\[
\begin{array}{cccc}
\Longleftrightarrow & \varphi^{\left(t\right)}\left(i\right) & = & \varphi^{\left(t\right)}f\left(i\right)+\left(-1\right)^{t+1}\cdot\mathfrak{s}_{f}\left(\gamma,i\right)\cdot\gamma\left(i\right),\\
\\
\Longleftrightarrow & \varphi^{\left(t\right)}f^{\left(o_{f}-1\right)}\left(i\right) & = & \varphi^{\left(t\right)}\left(i\right)+\left(-1\right)^{t+1}\cdot\mathfrak{s}_{f}\left(\gamma,f^{\left(o_{f}-1\right)}\left(i\right)\right)\cdot\gamma f^{\left(o_{f}-1\right)}\left(i\right),\\
\\
\Longleftrightarrow & f^{\left(o_{f}-1\right)}\left(i\right) & = & \varphi^{\left(t\right)}\left(\varphi^{\left(t\right)}\left(i\right)+\left(-1\right)^{t+1}\cdot\mathfrak{s}_{f}\left(\gamma,f^{\left(o_{f}-1\right)}\left(i\right)\right)\cdot\gamma f^{\left(o_{f}-1\right)}\left(i\right)\right),\\
\\
\Longleftrightarrow & f^{\left(o_{f}-1\right)}\left(i\right) & = & \varphi^{\left(t\right)}\left(\varphi^{\left(t\right)}\left(i\right)+\left(-1\right)^{t}\cdot\mathfrak{s}_{f^{\left(o_{f}-1\right)}}\left(\gamma f^{\left(o_{f}-1\right)},i\right)\cdot\gamma f^{\left(o_{f}-1\right)}\left(i\right)\right),
\end{array}
\]
where 
\[
\mathfrak{s}_{f^{\left(o_{f}-1\right)}}\left(\gamma f^{\left(o_{f}-1\right)},i\right)=-\mathfrak{s}_{f}\left(\gamma,f^{\left(o_{f}-1\right)}\left(i\right)\right),\ \forall\:i\in\mathbb{Z}_{n}.
\]
This completes the proof.
\end{proof}
Accounting for the complementary labeling symmetry described in equation
(\ref{Complementary Symmetry}) and applying the argument used in
the proof of proposition (\ref{Cycle_Orrientation_Proposition}),
to each non--trivial directed cycle occurring in $G_{f}$, it is
easy to see that for all 
\[
f\in\text{S}_{n}\cup\left\{ g:\mathbb{Z}_{n}^{\mathbb{Z}_{n}}:\left|g^{\left(n-1\right)}\left(\mathbb{Z}_{n}\right)\right|=1\right\} ,
\]
$\left|\text{GrL}\left(G_{f}\right)\right|$ is a multiple of $2^{\left(\text{number of connected components in }G_{f}\right)}$.
\begin{example}
\label{Star_Example}The GLC is easily verified for the family of
functional directed graphs of identically constant functions in $\mathbb{Z}_{n}^{\mathbb{Z}_{n}}$
i.e. the family of functional stars. Functional stars are functional
directed graphs of identically constant functions. For instance take
\[
f\,:\,\mathbb{Z}_{n}\rightarrow\mathbb{Z}_{n}
\]
\[
\text{such that}
\]
\[
f\left(i\right)=0,\quad\forall\:i\in\mathbb{Z}_{n}.
\]
We see that the functional directed graph of $f$ is gracefully labeled.
Furthermore for all $n>1$ we have
\[
\text{GrL}\left(G_{f}\right)=\left\{ G_{f},\,G_{\left(n-1-\text{id}\right)f\left(n-1-\text{id}\right)^{-1}}\right\} .
\]
\end{example}

\subsection{Preliminaries}

Recall that univariate polynomial notions such as the LCM and the
GCD do not generally extend to multivariate polynomials. However we
describe special settings where the notion of LCM extends to multivariate
polynomials. Let polynomials $F,\,H\in\mathbb{Q}\left[x_{0},\cdots,x_{n-1}\right]$
split into irreducible multilinear factors
\[
F\left(x_{0},\cdots,x_{n-1}\right)=\prod_{0\le i<m}\left(P_{i}(x_{0},\cdots,x_{n-1})\right)^{\alpha_{i}},\quad H\left(x_{0},\cdots,x_{n-1}\right)=\prod_{0\le i<m}\left(P_{i}(x_{0},\cdots,x_{n-1})\right)^{\beta_{i}}.
\]
In the factorization above assume that $\left\{ \alpha_{i},\beta_{i}\,:\,0\le i<m\right\} \subset\mathbb{Z}_{\ge0}$
and more importantly that each factor $P_{i}\left(\mathbf{x}\right)$
is a $\mathbb{Q}$--linear combination of variables $x_{0},\cdots,x_{n-1}$
of the form
\[
P_{i}\left(x_{0},\cdots,x_{n-1}\right)=\sum_{j\in\mathbb{Z}_{n}}a_{i,j}\,x_{j},\text{ where }\left\{ a_{ij}\,:\,\begin{array}{c}
0\le i<m\\
0\le j<n
\end{array}\right\} \subset\mathbb{Q}.
\]
Additionally, assume for each $k\in\mathbb{Z}_{n}$, and each factor
$P_{i}$ when viewed as a univariate polynomial in $x_{k}$ (with
coefficients from the ring $\mathbb{Q}\left[x_{0},\cdots,x_{k-1},x_{k+1},\cdots,x_{n-1}\right]$)
has no common roots with any other factor in $\left\{ P_{j}:0\le j\ne i<m\right\} $
in the field of fractions $\mathbb{Q}\left(x_{0},\cdots,x_{k-1},x_{k+1},\cdots,x_{n-1}\right)$,
in other words the resultant in the variable $x_{k}$ given by 
\[
\prod_{\begin{array}{c}
0\le u<v<m\\
0\notin\left\{ a_{v,k},a_{u,k}\right\} 
\end{array}}\left(\sum_{t\in\mathbb{Z}_{n}\backslash\left\{ k\right\} }\frac{a_{v,t}}{a_{v,k}}x_{t}-\sum_{s\in\mathbb{Z}_{n}\backslash\left\{ k\right\} }\frac{a_{u,s}}{a_{u,k}}x_{s}\right)
\]
does not vanish identically. In this restricted setting we can extend
the notion of LCM and GCD to multivariate polynomials $F$ and $H$
as follows 
\[
\text{LCM}\left(F,\,H\right)=\prod_{0\le i<m}\left(P_{i}(x_{0},\cdots,x_{n-1})\right)^{\max\left(\alpha_{i},\beta_{i}\right)},
\]
and
\[
\text{GCD}\left(F,\,H\right)=\prod_{0\le i<m}\left(P_{i}(x_{0},\cdots,x_{n-1})\right)^{\min\left(\alpha_{i},\beta_{i}\right)}.
\]
For convenience we adopt the falling factorial notation :
\[
\left(x\right)^{\underline{n}}\,:=\prod_{i\in\mathbb{Z}_{n}}\left(x-i\right).
\]
Also for an arbitrary polynomial $P\left(x_{0},\cdots,x_{m-1}\right)\in\mathbb{Q}\left[x_{0},\cdots,x_{m-1}\right]$
and $g\in\mathbb{Z}_{n}^{\mathbb{Z}_{m}}$, we denote by $P\left(g\right)$
the evaluation of $P$ at the sequence $\left(x_{i}=g\left(i\right):i\in\mathbb{Z}_{m}\right)$
i.e.
\[
P\left(g\right):=P\left(g\left(0\right),\cdots,g\left(i\right),\cdots,g\left(m-1\right)\right).
\]

\begin{prop}
\label{Quotient_Remainder_Expansion} Every $H\in\mathbb{Q}\left[x_{0},\cdots,x_{m-1}\right]$
admits a quotient-remainder expansion of the form
\[
H\left(x_{0},\cdots,x_{m-1}\right)=\sum_{\ell\in\mathbb{Z}_{m}}q_{\ell}\left(x_{0},\cdots,x_{m-1}\right)\left(x_{\ell}\right)^{\underline{n}}+\sum_{g\in\mathbb{Z}_{n}^{\mathbb{Z}_{m}}}H\left(g\right)\prod_{i\in\mathbb{Z}_{m}}\left(\prod_{j_{i}\in\mathbb{Z}_{n}\backslash\left\{ g\left(i\right)\right\} }\frac{x_{i}-j_{i}}{g\left(i\right)-j_{i}}\right),
\]
where $q_{\ell}\left(x_{0},\cdots,x_{m-1}\right)\in\mathbb{Q}\left[x_{0},\cdots,x_{m-1}\right]$
for all $\ell\in\mathbb{Z}_{m}$.
\end{prop}

\begin{proof}
We prove the claim by induction on $m$ (the number of variables).
The claim in the base case $m=1$, is the assertion that for all $n\ge1$,
$H\left(x_{0}\right)\in\mathbb{Q}\left[x_{0}\right]$ admits an expansion
of the form
\[
H\left(x_{0}\right)=q\left(x_{0}\right)\,\left(x_{0}\right)^{\underline{n}}+r\left(x_{0}\right),
\]
where $r\left(x_{0}\right)$ is a polynomial of degree less then $n$
called the remainder. Since the remainder $r\left(x_{0}\right)$ is
of degree at most $\left(n-1\right)$ it is completely determined
via Lagrange interpolation on $n$ distinct evaluation points as follows
\[
H\left(x_{0}\right)=q\left(x_{0}\right)\,\left(x_{0}\right)^{\underline{n}}+\sum_{g\in\mathbb{Z}_{n}^{\mathbb{Z}_{1}}}H\left(g\left(0\right)\right)\,\prod_{j\in\mathbb{Z}_{n}\backslash\left\{ g\left(0\right)\right\} }\left(\frac{x_{0}-j_{0}}{g\left(0\right)-j_{0}}\right).
\]
Therefore, the claim holds in the base case. Note that the same argument,
including Lagrange interpolation, applies to univariate polynomials
whose coefficients lies in a polynomial ring.

For the induction step, assume as induction hypothesis that the claim
holds for all $m$--variate polynomials $F\in\mathbb{Q}\left[x_{0},\cdots,x_{m-1}\right]$
namely assume that
\[
F=\sum_{\ell\in\mathbb{Z}_{m}}t_{\ell}\left(x_{0},\cdots,x_{m-1}\right)\left(x_{\ell}\right)^{\underline{n}}+\sum_{g\in\mathbb{Z}_{n}^{\mathbb{Z}_{m}}}F\left(g\right)\prod_{k\in\mathbb{Z}_{m}}\left(\prod_{j_{k}\in\mathbb{Z}_{n}\backslash\left\{ g\left(k\right)\right\} }\frac{x_{k}-j_{k}}{g\left(k\right)-j_{k}}\right).
\]
We now show that the hypothesis implies that the claim also holds
for all $\left(m+1\right)$--variate polynomials with rational coefficients.
Let $H\in\mathbb{Q}\left[x_{0},\cdots,x_{m}\right]$ be viewed as
a univariate polynomial in $x_{m}$ whose coefficients lie in the
polynomial ring $\mathbb{Q}\left[x_{0},\cdots,x_{m-1}\right]$. Invoking
the Quotient--Remainder Theorem and Lagrange interpolation over this
ring, we have
\[
H\left(x_{0},\cdots,x_{m}\right)=q\left(x_{0},\cdots,x_{m}\right)\,\left(x_{m}\right)^{\underline{n}}+r\left(x_{0},\cdots,x_{m}\right),
\]
where $r\left(x_{0},\cdots,x_{m}\right)\in\big(\mathbb{Q}[x_{0},\cdots,x_{m-1}]\big)[x_{m}]$
is of degree at most $n-1$ in the variable $x_{m}$. We write 
\[
r\left(x_{0},\cdots,x_{m}\right)=\sum_{k\in\mathbb{Z}_{n}}a_{k}\left(x_{0},\cdots,x_{m-1}\right)\,(x_{m})^{k},
\]
to justify that $a_{k}\left(x_{0},\cdots,x_{m-1}\right)\in\mathbb{Q}[x_{0},\cdots,x_{m-1}]$
for all $k\in\mathbb{Z}_{n}$, observe that
\[
\bigg(\text{Vandermonde}\left(\begin{array}{c}
0\\
\vdots\\
u\\
\vdots\\
n-1
\end{array}\right)\bigg)\cdot\left(\begin{array}{c}
a_{0}\left(x_{0},\cdots,x_{m-1}\right)\\
\vdots\\
a_{u}\left(x_{0},\cdots,x_{m-1}\right)\\
\vdots\\
a_{n-1}\left(x_{0},\cdots,x_{m-1}\right)
\end{array}\right)=\left(\begin{array}{c}
H(x_{0},\cdots,x_{m-1},0)\\
\vdots\\
H(x_{0},\cdots,x_{m-1},u)\\
\vdots\\
H(x_{0},\cdots,x_{m-1},n-1)
\end{array}\right),
\]
where
\[
\bigg(\text{Vandermonde}\left(\begin{array}{c}
0\\
\vdots\\
u\\
\vdots\\
n-1
\end{array}\right)\bigg)\left[i,j\right]=i^{j},\ \forall\,0\le i,j<n.
\]
Since
\[
\det\bigg(\text{Vandermonde}\left(\begin{array}{c}
0\\
\vdots\\
u\\
\vdots\\
n-1
\end{array}\right)\bigg)=\bigg(\prod_{k\in\mathbb{Z}_{n}}k!\bigg)\ne0.
\]
Thus the coefficients of the remainder $r\left(x_{0},\cdots,x_{m}\right)$
from the ring $\mathbb{Q}[x_{0},\cdots,x_{m-1}]$ obtained as entries
of the unique solution vector
\[
\left(\begin{array}{c}
a_{0}\left(x_{0},\cdots,x_{m-1}\right)\\
\vdots\\
a_{u}\left(x_{0},\cdots,x_{m-1}\right)\\
\vdots\\
a_{n-1}\left(x_{0},\cdots,x_{m-1}\right)
\end{array}\right)=\bigg(\text{Vandermonde}\left(\begin{array}{c}
0\\
\vdots\\
u\\
\vdots\\
n-1
\end{array}\right)\bigg)^{-1}\cdot\left(\begin{array}{c}
H(x_{0},\cdots,x_{m-1},0)\\
\vdots\\
H(x_{0},\cdots,x_{m-1},u)\\
\vdots\\
H(x_{0},\cdots,x_{m-1},n-1)
\end{array}\right).
\]
Equivalently, via Lagrange interpolation we write
\[
H=q_{m}\left(x_{0},\cdots,x_{m}\right)\left(x_{m}\right)^{\underline{n}}+\sum_{g\left(m\right)\in\mathbb{Z}_{n}}H\left(x_{0},\cdots,x_{m-1},g\left(m\right)\right)\,\prod_{j\in\mathbb{Z}_{n}\backslash\left\{ g\left(m\right)\right\} }\left(\frac{x_{m}-j_{m}}{g\left(m\right)-j_{m}}\right).
\]
Applying the induction hypothesis to $m$--variate polynomials in
$\left\{ H\left(x_{0},\cdots,x_{m-1},g\left(m\right)\right):g\left(m\right)\in\mathbb{Z}_{n}\right\} $
yields the desired claim.
\end{proof}
\begin{defn}
For an arbitrary $H\in\mathbb{Q}\left[x_{0},\cdots,x_{m-1}\right]$,
the \emph{canonical representative} of the congruence class of $H$
modulo the ideal generated by $\left\{ \left(x_{i}\right)^{\underline{n}}:i\in\mathbb{Z}_{m}\right\} $
denoted
\[
H\mod\left\{ \left(x_{i}\right)^{\underline{n}}:i\in\mathbb{Z}_{m}\right\} ,
\]
is the unique polynomial of degree at most $\left(n-1\right)$ in
each variable whose evaluations over the integer lattice $\mathbb{Z}_{n}^{\mathbb{Z}_{m}}$
i.e. $\left(\mathbb{Z}_{n}\right)^{m}$ matches evaluations of $H$
on the same lattice. Thus the canonical representative of 
\[
H\mod\left\{ \left(x_{i}\right)^{\underline{n}}:i\in\mathbb{Z}_{m}\right\} ,
\]
is
\begin{equation}
\sum_{g\in\mathbb{Z}_{n}^{\mathbb{Z}_{m}}}H\left(g\right)\,\prod_{k\in\mathbb{Z}_{m}}\left(\prod_{j_{k}\in\mathbb{Z}_{n}\backslash\left\{ g\left(k\right)\right\} }\frac{x_{k}-j_{k}}{g\left(k\right)-j_{k}}\right).\label{Canonical representative}
\end{equation}
The quotient--divisor part associated with the congruence class 
\[
H\mod\left\{ \left(x_{i}\right)^{\underline{n}}:i\in\mathbb{Z}_{m}\right\} ,
\]
is the polynomial
\[
H-\sum_{g\in\mathbb{Z}_{n}^{\mathbb{Z}_{m}}}H\left(g\right)\,\prod_{k\in\mathbb{Z}_{m}}\left(\prod_{j_{k}\in\mathbb{Z}_{n}\backslash\left\{ g\left(k\right)\right\} }\frac{x_{k}-j_{k}}{g\left(k\right)-j_{k}}\right).
\]
We see that the quotient--divisor part vanishes identically on the
lattice $\mathbb{Z}_{n}^{\mathbb{Z}_{m}}$.
\end{defn}

Thus the canonical representative of $H\in\mathbb{Q}\left[x_{0},\cdots,x_{n-1}\right]$
is obtained via Lagrange interpolation over evaluation points 
\[
\left\{ \left(g,\,H\left(g\right)\right):g\in\mathbb{Z}_{n}^{\mathbb{Z}_{n}}\right\} .
\]
Alternatively the canonical representative is obtained as the final
remainder devised by performing Euclidean divisions irrespective of
the order with which we perform the division by divisors successively
taken from univariate polynomials $\left\{ \left(x_{i}\right)^{\underline{n}}:i\in\mathbb{Z}_{n}\right\} $.
This follows from the fact that generators $\left\{ \left(x_{i}\right)^{\underline{n}}:i\in\mathbb{Z}_{n}\right\} $
for the corresponding ideal form a Groebner basis.

The next result recursively applies at each iteration the Quotient--Remainder
Expansion Theorem (\ref{Quotient_Remainder_Expansion}) to the quotient--divisor
part of the expansion to express the input polynomial as a $\mathbb{Q}$--linear
combination of Lagrange basis polynomials. For an arbitrary $g\in\mathbb{Z}_{n}^{\mathbb{Z}_{m}}$,
let the associated Lagrange basis polynomial be
\[
L_{g}\left(x_{0},\cdots,x_{i},\cdots,x_{m-1}\right):=\prod_{u\in\mathbb{Z}_{m}}\left(\prod_{v_{u}\in\mathbb{Z}_{n}\backslash\left\{ g\left(u\right)\right\} }\frac{x_{u}-v_{u}}{g\left(u\right)-v_{u}}\right).
\]
It follows from the diagonality criterion prescribed for all $\left(f,g\right)\in\mathbb{Z}_{n}^{\mathbb{Z}_{n}}\times\mathbb{Z}_{n}^{\mathbb{Z}_{n}}$
by
\[
L_{f}\left(f\right)=1\,\text{ and }\,L_{f}\left(g\right)=0\,\text{ when }f\ne g,
\]
 that Lagrange basis polynomials $\left\{ L_{f}\left(x_{0},\cdots,x_{n-1}\right):f\in\mathbb{Z}_{n}^{\mathbb{Z}_{n}}\right\} $
are linearly independent.
\begin{cor}
\label{Quotient_Remainder_Corollary} Let $P\in\mathbb{Q}\left[x_{0},\cdots,x_{n-1}\right]$
be a polynomial of degree at most $d\ge n$ in any of its variables
$x_{0},\cdots,x_{n-1}$ then
\[
P\left(x_{0},\cdots,x_{n-1}\right)=\sum_{0\le k\le d+1-n}\left(\sum_{g_{k}\in\mathbb{Z}_{n+k}^{\mathbb{Z}_{n}}}b_{g_{k}}\,L_{g_{k}}\left(x_{0},\cdots,x_{n-1}\right)\right),
\]
where for all $t<d-n$,
\[
b_{g_{0}}=P\left(g_{0}\right)\:\text{ and }\:b_{g_{t+1}}=P\left(g_{t+1}\right)-\sum_{0\le k\le t}\left(\sum_{g_{k}\in\mathbb{Z}_{n+k}^{\mathbb{Z}_{n}}}b_{g_{k}}\,L_{g_{k}}\left(g_{t+1}\right)\right).
\]
\end{cor}

\begin{proof}
Applying proposition \ref{Quotient_Remainder_Expansion} to $P$ yields
an expansion of the form
\[
P\left(x_{0},\cdots,x_{n-1}\right)=\sum_{\ell\in\mathbb{Z}_{n}}q_{0,\ell}\left(x_{0},\cdots,x_{n-1}\right)\left(x_{\ell}\right)^{\underline{n}}+\sum_{g_{0}\in\mathbb{Z}_{n+0}^{\mathbb{Z}_{n}}}P\left(g_{0}\right)\,L_{g_{0}}\left(x_{0},\cdots,x_{n-1}\right).
\]
We set $b_{g_{0}}=P\left(g_{0}\right)$ for all $g_{0}\in\mathbb{Z}_{n+0}^{\mathbb{Z}_{n}}$
and we write
\[
P\left(x_{0},\cdots,x_{n-1}\right)=\sum_{\ell\in\mathbb{Z}_{n}}q_{0,\ell}\left(x_{0},\cdots,x_{n-1}\right)\left(x_{\ell}\right)^{\underline{n}}+\sum_{g_{0}\in\mathbb{Z}_{n+0}^{\mathbb{Z}_{n}}}b_{g_{0}}\,L_{g_{0}}\left(x_{0},\cdots,x_{n-1}\right).
\]
The quotient divisor part 
\[
\sum_{\ell\in\mathbb{Z}_{n}}q_{0,\ell}\left(x_{0},\cdots,x_{n-1}\right)\left(x_{\ell}\right)^{\underline{n}}=P\left(x_{0},\cdots,x_{n-1}\right)-\sum_{g_{0}\in\mathbb{Z}_{n+0}^{\mathbb{Z}_{n}}}b_{g_{0}}\,L_{g_{0}}\left(x_{0},\cdots,x_{n-1}\right).
\]
Applying proposition. \ref{Quotient_Remainder_Expansion} to the quotient
remainder part, yields an expansion of $P$ the form
\[
P\left(x_{0},\cdots,x_{n-1}\right)=
\]
\[
\sum_{\ell\in\mathbb{Z}_{n}}q_{1,\ell}\left(x_{0},\cdots,x_{n-1}\right)\left(x_{\ell}\right)^{\underline{n+1}}+\sum_{g_{1}\in\mathbb{Z}_{n+1}^{\mathbb{Z}_{n}}}b_{g_{1}}\,L_{g_{1}}\left(x_{0},\cdots,x_{n-1}\right)+\sum_{g_{0}\in\mathbb{Z}_{n+0}^{\mathbb{Z}_{n}}}P\left(g_{0}\right)\,L_{g_{0}}\left(x_{0},\cdots,x_{n-1}\right),
\]
where
\[
b_{g_{1}}=P\left(g_{1}\right)-\sum_{g_{0}\in\mathbb{Z}_{n+0}^{\mathbb{Z}_{n}}}b_{g_{0}}\,L_{g_{0}}\left(g_{1}\right).
\]
Continuing in a similar vein we iterate proposition \ref{Quotient_Remainder_Expansion}
on subsequent quotient--divisor parts. The procedure terminates at
the iteration where the quotient--divisor part vanishes identically.
Thus completing the proof.
\end{proof}

\subsection{Determinantal Certificate}

The following proposition expresses via a determinantal certificate
a second necessary and sufficient condition for the functional directed
graph $G_{f}$ of $f\in\mathbb{Z}_{n}^{\mathbb{Z}_{n}}$ to be graceful.
\begin{prop}
\label{Certificate_of_Grace} The directed graph $G_{f}$ of $f\in\mathbb{Z}_{n}^{\mathbb{Z}_{n}}$
is graceful if and only if
\[
0\not\equiv\text{LCM}\left(\prod_{0\le i<j<n}\left(x_{j}-x_{i}\right),\,\prod_{0\le u<v<n}\left(\left(x_{f\left(v\right)}-x_{v}\right)^{2}-\left(x_{f\left(u\right)}-x_{u}\right)^{2}\right)\right)\text{mod}\left\{ \left(x_{k}\right)^{\underline{n}}:k\in\mathbb{Z}_{n}\right\} 
\]
\end{prop}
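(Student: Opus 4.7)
The strategy is to translate the congruence into a statement about the existence of an evaluation point in $(\mathbb{Z}_n)^n$, then identify such a point with a graceful labeling of $G_f$. Set $V(\mathbf{x}) := \prod_{0 \le i < j < n}(x_j - x_i)$ and $E(\mathbf{x}) := \prod_{0 \le i < j < n}\bigl((x_{f(j)} - x_j)^2 - (x_{f(i)} - x_i)^2\bigr)$. By the Lagrange expansion in Eq.~\eqref{Canonical representative}, the canonical representative of any polynomial $H(\mathbf{x})$ modulo $\{(x_k)^{\underline{n}} : k \in \mathbb{Z}_n\}$ is the unique polynomial of degree less than $n$ in each variable agreeing with $H$ on $(\mathbb{Z}_n)^n$, and the Lagrange basis polynomials indexed by $g \in \mathbb{Z}_n^{\mathbb{Z}_n}$ are linearly independent. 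Thus the canonical representative is nonzero iff $H(g(0), \ldots, g(n-1)) \ne 0$ for at least one $g \in \mathbb{Z}_n^{\mathbb{Z}_n}$, reducing the task to showing that $\text{LCM}(V, E)$ is nonvanishing at some lattice point in $(\mathbb{Z}_n)^n$ iff $G_f$ is graceful.

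Next I would unpack each evaluation. For any $g \in \mathbb{Z}_n^{\mathbb{Z}_n}$, $V(g(0),\ldots,g(n-1)) \ne 0$ iff $g$ is injective, equivalently $g \in \text{S}_n$; and $E(g(0),\ldots,g(n-1)) \ne 0$ iff the $n$ edge labels $|g(f(i)) - g(i)|$ are pairwise distinct, via the identity $a^2 - b^2 = 0 \Leftrightarrow |a| = |b|$. Since $n$ pairwise distinct values drawn from $\mathbb{Z}_n$ must exhaust $\mathbb{Z}_n$, the conjunction $V(g)\ne 0$ and $E(g)\ne 0$ is exactly the condition that the relabeling $\sigma := g$ witnesses gracefulness of $G_f$ in the sense of the first bullet of the Preliminaries.

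Finally I would show $\text{LCM}(V, E)(g) \ne 0$ iff both $V(g) \ne 0$ and $E(g) \ne 0$. Both $V$ and $E$ split into linear forms over $\mathbb{Q}[x_0, \ldots, x_{n-1}]$, and two non-proportional linear forms share no common root in the fraction field $\mathbb{Q}(x_0,\ldots,\widehat{x_k},\ldots,x_{n-1})$ for any $k$ by direct coefficient comparison; proportional factors are absorbed by the $\max$-exponent rule in the paper's LCM definition. Consequently $\text{LCM}(V, E)$ has the same vanishing locus as the product $V \cdot E$, and combining with the previous paragraph gives the proposition. The main obstacle is this last step: verifying that shared linear factors between $V$ and $E$ (which can arise, e.g., when $f(j) = i$ and $f(i) = j$ cause $(x_{f(j)} - x_j) - (x_{f(i)} - x_i)$ to be proportional to $x_i - x_j$) do not corrupt the vanishing-set equivalence. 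This is handled precisely by the $\max$-exponent convention, after which the proof is a routine unwinding of definitions.
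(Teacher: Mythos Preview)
Your proposal is correct and follows essentially the same route as the paper: both reduce the congruence to the existence of a lattice point in $(\mathbb{Z}_n)^n$ where the LCM construction is nonzero via the Lagrange interpolation description of the canonical representative, and then identify nonvanishing of the vertex and edge Vandermonde factors with injectivity of the labeling and distinctness of the induced edge labels respectively. Your treatment of the LCM step is slightly more explicit than the paper's (which simply asserts that the LCM vanishes only if one of its irreducible linear factors does), but the argument is the same in substance.
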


\begin{proof}
The LCM in the assertion is well defined since
\[
\prod_{0\le u<v<n}\left(\left(x_{f\left(v\right)}-x_{v}\right)^{2}-\left(x_{f\left(u\right)}-x_{u}\right)^{2}\right)=\prod_{0\le u<v<n}\left(x_{f\left(v\right)}-x_{v}-x_{f\left(u\right)}+x_{u}\right)\left(x_{f\left(v\right)}-x_{v}+x_{f\left(u\right)}-x_{u}\right).
\]
We see that the second input to the LCM corresponds to a product of
$\left\{ -2,-1,0,1,2\right\} $--linear combination of variables.
The first Vandermonde determinant factor
\[
\prod_{0\le i<j<n}\left(x_{j}-x_{i}\right)
\]
vanishes whenever two distinct variables are assigned the same vertex
label from $\mathbb{Z}_{n}$. Similarly, the second Vandermonde determinant
factor 
\[
\prod_{0\le u<v<n}\left(\left(x_{f\left(v\right)}-x_{v}\right)^{2}-\left(x_{f\left(u\right)}-x_{u}\right)^{2}\right)
\]
vanishes whenever two distinct edges are assigned the same induced
absolute subtractive edge label from $\mathbb{Z}_{n}$. Consider the
following multiple of the LCM polynomial
\[
F_{f}\left(x_{0},\cdots,x_{n-1}\right)=\prod_{0\le i<j<n}\left(x_{j}-x_{i}\right)\left(\left(x_{f\left(j\right)}-x_{j}\right)^{2}-\left(x_{f\left(i\right)}-x_{i}\right)^{2}\right).
\]
It suffices to show that the functional directed graph $G_{f}$ of
$f\in\mathbb{Z}_{n}^{\mathbb{Z}_{n}}$ is graceful if and only if
\[
0\not\equiv F_{f}\left(x_{0},\cdots,x_{n-1}\right)\text{ mod}\left\{ \left(x_{k}\right)^{\underline{n}}:k\in\mathbb{Z}_{n}\right\} .
\]
 By proposition (\ref{Quotient_Remainder_Expansion}), the polynomial
$F_{f}$ admits an expansion of the form
\[
F_{f}(x_{0},\cdots,x_{n-1})=\sum_{\ell\in\mathbb{Z}_{n}}q_{\ell}\left(x_{0},\cdots,x_{n-1}\right)\left(x_{\ell}\right)^{\underline{n}}+\sum_{g\in\mathbb{Z}_{n}^{\mathbb{Z}_{n}}}F_{f}\left(g\right)L_{g}\left(x_{0},\cdots,x_{n-1}\right).
\]
Note that for all $g\in\mathbb{Z}_{n}^{\mathbb{Z}_{n}}$
\[
F_{f}\left(g\right)=\prod_{0\le i<j<n}\left(g\left(j\right)-g\left(i\right)\right)\left(\left(gf\left(j\right)-g\left(j\right)\right)^{2}-\left(gf\left(i\right)-g\left(i\right)\right)^{2}\right).
\]
Hence $F_{f}\left(g\right)$ vanishes if $g\in\mathbb{Z}_{n}^{\mathbb{Z}_{n}}\backslash\text{S}_{n}$
on the other hand if $\sigma\in\text{S}_{n}$ and $G_{\sigma f\sigma^{-1}}$
is not a gracefully labeled then $F_{f}\left(\sigma\right)$ also
vanishes. Thus
\[
F_{f}\left(x_{0},\cdots,x_{n-1}\right)=\sum_{\ell\in\mathbb{Z}_{n}}q_{\ell}\left(x_{0},\cdots,x_{n-1}\right)\left(x_{\ell}\right)^{\underline{n}}+
\]
\[
\sum_{\begin{array}{c}
\sigma\in\text{S}_{n}\\
G_{\sigma f\sigma^{-1}}\in\text{GrL}\left(G_{f}\right)
\end{array}}\prod_{0\le i<j<n}\left(\sigma\left(j\right)-\sigma\left(i\right)\right)\left(\left(\sigma f\left(j\right)-\sigma\left(j\right)\right)^{2}-\left(\sigma f\left(i\right)-\sigma\left(i\right)\right)^{2}\right)L_{\sigma}\left(x_{0},\cdots,x_{n-1}\right).
\]
Observe that for all $\gamma\in\left\{ \sigma\in\text{S}_{n}:G_{\sigma f\sigma^{-1}}\in\text{GrL}\left(G_{f}\right)\right\} $
we have
\[
\prod_{0\le i<j<n}\left(\gamma\left(j\right)-\gamma\left(i\right)\right)\left(\left(\gamma f\left(j\right)-\gamma\left(j\right)\right)^{2}-\left(\gamma f\left(i\right)-\gamma\left(i\right)\right)^{2}\right)\in\left\{ -\prod_{v\in\mathbb{Z}_{n}}\left(\left(v!\right)^{2}\frac{\left(n-1+v\right)!}{\left(2v\right)!}\right),\prod_{v\in\mathbb{Z}_{n}}\left(\left(v!\right)^{2}\frac{\left(n-1+v\right)!}{\left(2v\right)!}\right)\right\} .
\]
We thus conclude that 
\[
0\not\equiv F_{f}\left(x_{0},\cdots,x_{n-1}\right)\text{ mod}\left\{ \left(x_{k}\right)^{\underline{n}}:k\in\mathbb{Z}_{n}\right\} .
\]
if and only if $\emptyset\ne\text{GrL}\left(G_{f}\right)$ as claimed.
\end{proof}
We now briefly explain why the proposed polynomial construction is
determinental. Let $\mathbf{V},\mathbf{G}_{f}\in\left(\mathbb{Q}\left[x_{0},\cdots,x_{n-1}\right]\right)^{n\times n}$
with entries given by 
\[
\mathbf{V}\left[i,j\right]=\left(x_{i}\right)^{j},\quad\mathbf{G}_{f}\left[i,j\right]=\left(x_{f\left(j\right)}-x_{j}\right)^{2i}\text{ for all }\:0\le i,j<n,
\]
The matrices $\mathbf{F}$ and $\mathbf{G}_{f}$ are Vandermonde matrices
whose determinants are well known and are respectively 
\[
\det\left(\mathbf{V}\right)=\prod_{0\le i<j<n}\left(x_{j}-x_{i}\right),
\]
and
\[
\det\left(\mathbf{G}_{f}\right)=\prod_{0\le i<j<n}\left(\left(x_{f\left(j\right)}-x_{j}\right)^{2}-\left(x_{f\left(i\right)}-x_{i}\right)^{2}\right).
\]
By the multiplicative property of the determinant, we have
\[
\det\left(\mathbf{V}\right)\det\left(\mathbf{G}_{f}\right)=\det\left(\mathbf{V}\mathbf{G}_{f}\right).
\]
The entries of the matrix product $\mathbf{V}\mathbf{G}_{f}$ are
such that
\[
\left(\mathbf{V}\mathbf{G}_{f}\right)\left[i,j\right]=\frac{1-\left(x_{i}\left(x_{f\left(j\right)}-x_{j}\right)^{2}\right)^{n}}{1-x_{i}\left(x_{f\left(j\right)}-x_{j}\right)^{2}},\,\forall\,0\le i,j<n
\]
and the polynomial of interest is 
\[
\det\left(\mathbf{V}\mathbf{G}_{f}\right)=F_{f}\left(x_{0},\cdots,x_{n-1}\right)=\prod_{0\le i<j<n}\left(x_{j}-x_{i}\right)\left(\left(x_{f\left(j\right)}-x_{j}\right)^{2}-\left(x_{f\left(i\right)}-x_{i}\right)^{2}\right).
\]
We make redundant factors more apparent by factoring det$\left(\mathbf{V}\mathbf{G}_{f}\right)$
into products of linear combinations of the variables as follows
\[
\det\left(\mathbf{V}\mathbf{G}_{f}\right)=\prod_{0\le i<j<n}\left(x_{j}-x_{i}\right)\left(x_{f\left(j\right)}-x_{j}-x_{f\left(i\right)}+x_{i}\right)\left(x_{f\left(j\right)}-x_{j}+x_{f\left(i\right)}-x_{i}\right).
\]
We invoke the LCM as a means of removing from det$\left(\mathbf{V}\mathbf{G}_{f}\right)$
redundant factors from the determinantal construction. For instance
when $G_{f}$ is connected and contains no cycle other then the trivial
cycle made by a loop edge and $f\left(u\right)\le u$ for all $u\in\mathbb{Z}_{n}$,
then redundant factors appear only when
\[
\begin{array}{c}
f\left(i\right)=f\left(j\right)\\
i<j
\end{array}\:\text{ or }\:d\left(i,f^{\left(2\right)}\left(i\right)\right)=2.
\]
where $d\left(u,v\right)$ denotes the non--loop edge distance separating
vertex $u$ from vertex $v$ in $G_{f}$.
\[
\text{LCM}\left(\det\left(\mathbf{V}\right),\,\det\left(\mathbf{G}_{f}\right)\right)=\text{LCM}\left(\prod_{0\le i<j<n}\left(x_{j}-x_{i}\right),\,\prod_{0<i<j<n}\left(\left(x_{f\left(j\right)}-x_{j}\right)^{2}-\left(x_{f\left(i\right)}-x_{i}\right)^{2}\right)\right)=
\]
\[
\prod_{0\le i<j<n}\left(x_{j}-x_{i}\right)\prod_{d\left(i,f^{\left(2\right)}\left(i\right)\right)=2}\left(2x_{f\left(i\right)}-x_{i}-x_{f^{\left(2\right)}\left(i\right)}\right)\prod_{\begin{array}{c}
0\le i<j<n\\
f\left(i\right)=f\left(j\right)
\end{array}}\left(2x_{f\left(j\right)}-x_{j}-x_{i}\right)\times
\]
\[
\prod_{d\left(i,f^{\left(3\right)}\left(i\right)\right)=3}\left(\left(x_{f\left(i\right)}-x_{i}\right)^{2}-\left(x_{f^{\left(3\right)}\left(i\right)}-x_{f^{\left(2\right)}\left(i\right)}\right)^{2}\right)\prod_{\begin{array}{c}
0<i<j<n\\
d\left(i,j\right)\ge3
\end{array}}\left(\left(x_{f\left(j\right)}-x_{j}\right)^{2}-\left(x_{f\left(i\right)}-x_{i}\right)^{2}\right).
\]
Finally note that the canonical representative of $\det\left(\mathbf{V}\mathbf{G}_{f}\right)^{2}$
is
\[
\prod_{v\in\mathbb{Z}_{n}}\left(\left(v!\right)^{2}\,\frac{\left(n-1+v\right)!}{\left(2v\right)!}\right)^{2}\sum_{\begin{array}{c}
\sigma\in\text{S}_{n}\\
G_{\sigma f\sigma^{-1}}\in\text{GrL}\left(G_{f}\right)
\end{array}}\prod_{\begin{array}{c}
i\in\mathbb{Z}_{n}\\
j_{i}\in\mathbb{Z}_{n}\backslash\left\{ \sigma\left(i\right)\right\} 
\end{array}}\left(\frac{x_{i}-j_{i}}{\sigma\left(i\right)-j_{i}}\right).
\]

\section{The transformation monoid $\mathbb{Z}_{n}^{\mathbb{Z}_{n}}$ and
the composition lemma.}

We briefly review basic properties of the transformation monoid $\mathbb{Z}_{n}^{\mathbb{Z}_{n}}$
relevant to our main result.
\begin{prop}
For all $f\in\mathbb{Z}_{n}^{\mathbb{Z}_{n}}$, we have
\[
1\le\min_{\sigma\in\text{S}_{n}}\left|\left\{ \left|\sigma f\sigma^{\left(-1\right)}\left(i\right)-i\right|:i\in\mathbb{Z}_{n}\right\} \right|\le\rho_{f}+\begin{cases}
\begin{array}{cc}
1 & \text{ if }f\left(i\right)=i,\ \forall\,i\in\mathbb{Z}_{n},\\
2 & \text{ else if }\exists\:i\in\mathbb{Z}_{n}\text{ s.t. }f\left(i\right)=i\\
0 & \text{otherwise}
\end{array},\end{cases}
\]
where $\rho_{f}$ denotes the minimum number of non--loop edge deletions
required in $G_{f}$ to obtain a subgraph which is a union of disjoint
paths and two--cycles possibly having loop edges.
\end{prop}

\begin{proof}
The lower bound is attained when the $n$ edges of $G_{f}$ are assigned
the same induced absolute subtractive edge label. We justify the upper--bound
by considering any one of the possible deletions of $\rho_{f}$ edges
from $G_{f}$ to obtain a spanning union of disjoint paths. We then
sequentially label vertices along each path starting from one endpoint
and increasing by one the label for each vertex encountered as we
move along the path towards the second endpoint. This procedure greedily
maximizes the number of edges assigned the induced absolute subtractive
edge label one. The only edges in the proposed relabeling of $G_{f}$
whose induced absolute subtractive edge labels possibly differ from
one, are labels of the $\rho_{f}$ deleted non--loop edges as well
as loop edges if any occurs in $G_{f}$.
\end{proof}
Note that the lower bound is sharp when $f\in$ S$_{n}$ is either
the identity element or any other involution having no fixed points.
The upper bound is sharp for any involution, or alternatively a functional
directed graph made up of a single spanning directed cycle or alternatively
a functional path.
\begin{prop}
For all $f\in\mathbb{Z}_{n}^{\mathbb{Z}_{n}}$,
\[
n\ge\max_{\sigma\in\text{S}_{n}}\left|\left\{ \left|\sigma f\sigma^{-1}\left(i\right)-i\right|:i\in\mathbb{Z}_{n}\right\} \right|\ge\left|\left\{ \left(i,f(i)\right):\begin{array}{c}
i\in\mathbb{Z}_{n}\\
i\ne f(i)
\end{array}\right\} \right|-\rho_{f}+\begin{cases}
\begin{array}{cc}
1 & \text{ if }\exists\:i\in\mathbb{Z}_{n}\text{ s.t. }f(i)=i\\
0 & \text{otherwise}
\end{array}.\end{cases}
\]
where $\rho_{f}$ denotes the minimum number of non--loop edge deletions
required in $G_{f}$ to obtain a subgraph which is a union of disjoint
paths possibly having loop edges.
\end{prop}

\begin{proof}
The upper bound follows from the observation $\left|E\left(G_{f}\right)\right|=n$.
We justify the lower--bound by considering every possible deletion
of $\rho_{f}$ edges from $G_{f}$ to obtain a union of disjoint paths,
possibly having loop edges. We then sequentially label vertices along
each path starting from one endpoint and alternating between largest
and smallest unassigned label for each vertex encountered as we move
along the path towards the second endpoint. This procedure greedily
maximizes the number of distinct edge labels. The only edges in the
proposed relabeling of $G_{f}$ whose induced absolute subtractive
edge labels possibly repeat correspond to labels of deleted non--loop
edges as well as duplicate loop edges if any occurs in $G_{f}$.
\end{proof}
Note that the upper bound is sharp for a graceful functional directed
graph. The lower bound is sharp for any involution or alternatively
a functional directed graph made of a single spanning directed cycle
i.e. a directed Hamiltonian cycle.
\begin{defn}
Let $P\in\mathbb{Q}\left[x_{0},\cdots,x_{n-1}\right]$, we denote
by Aut$\left(P\right)$ the stabilizer subgroup of S$_{n}\subset\mathbb{Z}_{n}^{\mathbb{Z}_{n}}$
associated with the polynomial $P$ defined with respect to permutations
of variables $x_{0},\cdots,x_{n-1}$. In other words Aut$\left(P\right)$
denotes the set of permutations of the variables which fixes $P$.
\end{defn}

\begin{prop}[Stabilizer subgroup]
\label{Stabiliser_Subgroup} For an arbitrary $f\in\mathbb{Z}_{n}^{\mathbb{Z}_{n}}$,
with at least one fixed point
\[
\text{Aut}\left(\prod_{0\le i\ne j<n}\left(\big(x_{f\left(j\right)}-x_{j}\big)^{2}-\big(x_{f\left(i\right)}-x_{i}\big)^{2}\right)\right)=\begin{cases}
\begin{array}{cc}
\text{S}_{n}\text{ if }\left|\left\{ \big(x_{f\left(i\right)}-x_{i}\big)^{2}:i\in\mathbb{Z}_{n}\right\} \right|<n\\
\\
\text{Aut}\left(G_{f}\cup G_{f^{\top}}\backslash\left\{ \left(u,f(u)\right):u=f(u)\right\} \right) & \text{otherwise}
\end{array},\end{cases}
\]
where $G_{f}\cup G_{f^{\top}}\backslash\left\{ \left(u,f\left(u\right)\right):u=f\left(u\right)\right\} $
denotes the loopless bi--directed digraph which underlies the functional
directed graph $G_{f}$.
\end{prop}

\begin{proof}
For notational convenience let 
\[
p_{f}\left(x_{0},\cdots,x_{n-1}\right)=\prod_{0\le i<j<n}\left(\left(x_{f\left(j\right)}-x_{j}\right)^{2}-\left(x_{f\left(i\right)}-x_{i}\right)^{2}\right)
\]
If the underlying undirected graph of $G_{f}\cup G_{f^{\top}}$ has
fewer than $n$ edges, then $\left|\left\{ \left(x_{f\left(i\right)}-x_{i}\right)^{2}:i\in\mathbb{Z}_{n}\right\} \right|<n$
and $p_{f}$ vanishes identically. Thus Aut$\left(p_{f}\right)=$
S$_{n}$. So assume $\left|\left\{ \left(x_{f\left(i\right)}-x_{i}\right)^{2}:i\in\mathbb{Z}_{n}\right\} \right|=n$
and thus $p_{f}$ does not vanish identically. Observe that for all
$\gamma\in$ S$_{n}$, we have
\[
\prod_{0\le i\ne j<n}\left(\left(x_{f\left(j\right)}-x_{j}\right)^{2}-\left(x_{f\left(i\right)}-x_{i}\right)^{2}\right)=\prod_{0\le\gamma^{-1}\left(i\right)\ne\gamma^{-1}\left(j\right)<n}\left(\left(x_{f\gamma^{-1}\left(j\right)}-x_{\gamma^{-1}\left(j\right)}\right)^{2}-\left(x_{f\gamma^{-1}\left(i\right)}-x_{\gamma^{-1}\left(i\right)}\right)^{2}\right).
\]
The right action does not change the polynomial because it simply
rearranges the factors as follows\textbf{
\[
\prod_{0\le i\ne j<n}\left(\left(x_{f\left(j\right)}-x_{j}\right)^{2}-\left(x_{f\left(i\right)}-x_{i}\right)^{2}\right)=\prod_{0\le i\ne j<n}\left(\left(x_{f\gamma^{-1}\left(j\right)}-x_{\gamma^{-1}\left(j\right)}\right)^{2}-\left(x_{f\gamma^{-1}\left(i\right)}-x_{\gamma^{-1}\left(i\right)}\right)^{2}\right).
\]
}Acting on $p_{f}$ by permuting it's variables effects a left action
so that
\[
\prod_{0\le i\ne j<n}\left(\left(x_{\gamma f\left(j\right)}-x_{\gamma\left(j\right)}\right)^{2}-\left(x_{\gamma f\left(i\right)}-x_{\gamma\left(i\right)}\right)^{2}\right)=\prod_{0\le i\ne j<n}\left(\left(x_{\gamma f\gamma^{\left(-1\right)}\left(j\right)}-x_{\gamma\gamma^{\left(-1\right)}\left(j\right)}\right)^{2}-\left(x_{\gamma f\gamma^{\left(-1\right)}\left(i\right)}-x_{\gamma\gamma^{\left(-1\right)}\left(i\right)}\right)^{2}\right),
\]
\[
\implies\prod_{0\le i\ne j<n}\left(\left(x_{\gamma f\left(j\right)}-x_{\gamma\left(j\right)}\right)^{2}-\left(x_{\gamma f\left(i\right)}-x_{\gamma\left(i\right)}\right)^{2}\right)=\prod_{0\le i\ne j<n}\left(\left(x_{\gamma f\gamma^{\left(-1\right)}\left(j\right)}-x_{j}\right)^{2}-\left(x_{\gamma f\gamma^{\left(-1\right)}\left(i\right)}-x_{i}\right)^{2}\right).
\]
Thus $\left(p_{f}(x_{\gamma(0)},\dots,x_{\gamma(n-1)})\right)^{2}=\left(p_{\gamma f\gamma^{-1}}(x_{0},\dots,x_{n-1})\right)^{2}$.
If on the one hand $\sigma\in$ Aut$\left(G_{f}\cup G_{f^{\top}}\backslash\left\{ \left(u,f\left(u\right)\right):u=f\left(u\right)\right\} \right)$,
then 
\[
\left(p_{f}\right)^{2}=\left(p_{\sigma f\sigma^{-1}}\right)^{2}=\left(p_{f}(x_{\sigma(0)},\dots,x_{\sigma(n-1)})\right)^{2}
\]
and $\sigma\in$ Aut$\left((p_{f})^{2}\right)$. If on the other hand
$\sigma\notin$ Aut$\left(G_{f}\cup G_{f^{\top}}\backslash\left\{ \left(u,f\left(u\right)\right):u=f\left(u\right)\right\} \right)$,
we have 
\[
\left(p_{f}\right)^{2}\ne\left(p_{\sigma f\sigma^{-1}}\right)^{2}=\left(p_{f}(x_{\sigma(0)},\dots,x_{\sigma(n-1)})\right)^{2}
\]
 and $\sigma\notin$ Aut$\left((p_{f})^{2}\right)$. Since $p_{f}$
determines\footnote{We discuss this further in the appendix} the
edges set of $G_{f}\cup G_{f^{\top}}\backslash\left\{ \left(u,f\left(u\right)\right):u=f\left(u\right)\right\} $.
\end{proof}
\begin{lem}[Zeilberger Birthday Lemma\footnote{The need for this lemma was pointed out to the author by Doron Zeilberger
on July 2nd 2022 which happens to coincide with his birthday.}]
\label{ZBL} Let $f\in\mathbb{Z}_{n}^{\mathbb{Z}_{n}}$ be subject
to the fixed point condition $f^{\left(n-1\right)}\left(\mathbb{Z}_{n}\right)=\left\{ r\right\} $
where $n\ge3$ and let
\[
F_{f}\left(\mathbf{x}\right)=\prod_{0\le i<j<n}\left(x_{j}-x_{i}\right)\left(\left(x_{f\left(j\right)}-x_{j}\right)^{2}-\left(x_{f\left(i\right)}-x_{i}\right)^{2}\right).
\]
If 
\[
0\not\equiv F_{f}\left(\mathbf{x}\right)\mod\left\{ \left(x_{u}\right)^{\underline{n}}:u\in\mathbb{Z}_{n}\right\} ,
\]
then
\[
\text{Aut}\left(\sum_{g\in\mathbb{Z}_{n}^{\mathbb{Z}_{n}}}\left(F_{f}(g)\right)^{2}L_{g}\left(\mathbf{x}\right)\right)\supseteq\text{Aut}\left(G_{f}\cup G_{f^{\top}}\backslash\left(r,r\right)\right).
\]
\end{lem}

\begin{proof}
We start by showing that for all permutations $\gamma\in$ S$_{n}$
we have 
\[
\left(F_{f}(x_{\gamma\left(0\right)},\cdots,x_{\gamma\left(n-1\right)})\right)^{2}=\left(F_{\gamma f\gamma^{-1}}(x_{0},\cdots,x_{n-1})\right)^{2}.
\]
Observe that, by commutativity of the product, the right action by
any $\gamma\in$ S$_{n}$ leaves $\left(F_{f}(x_{0},\cdots,x_{n-1})\right)^{2}$
unchanged since
\[
\prod_{0\le i\ne j<n}\left(\left(x_{f\left(j\right)}-x_{j}\right)^{2}-\left(x_{f\left(i\right)}-x_{i}\right)^{2}\right)^{2}=\prod_{0\le\gamma^{-1}\left(i\right)\ne\gamma^{-1}\left(j\right)<n}\left(\left(x_{f\gamma^{-1}\left(j\right)}-x_{\gamma^{-1}\left(j\right)}\right)^{2}-\left(x_{f\gamma^{-1}\left(i\right)}-x_{\gamma^{-1}\left(i\right)}\right)^{2}\right).
\]
Thus for all $\gamma\in$ S$_{n}$ by commutativity of products we
have\textbf{
\[
\prod_{0\le i<j<n}\left(\left(x_{f\left(j\right)}-x_{j}\right)^{2}-\left(x_{f\left(i\right)}-x_{i}\right)^{2}\right)^{2}=\prod_{0\le i<j<n}\left(\left(x_{f\gamma^{-1}\left(j\right)}-x_{\gamma^{-1}\left(j\right)}\right)^{2}-\left(x_{f\gamma^{-1}\left(i\right)}-x_{\gamma^{-1}\left(i\right)}\right)^{2}\right)^{2}.
\]
}Consequently, left action by any $\gamma\in$ S$_{n}$ effects an
action by conjugation on $f$ as follows
\[
\prod_{0\le i<j<n}\left(\left(x_{\gamma f\left(j\right)}-x_{\gamma\left(j\right)}\right)^{2}-\left(x_{\gamma f\left(i\right)}-x_{\gamma\left(i\right)}\right)^{2}\right)^{2}=\prod_{0\le i<j<n}\left(\left(x_{\gamma f\gamma^{\left(-1\right)}\left(j\right)}-x_{\gamma\gamma^{\left(-1\right)}\left(j\right)}\right)^{2}-\left(x_{\gamma f\gamma^{\left(-1\right)}\left(i\right)}-x_{\gamma\gamma^{\left(-1\right)}\left(i\right)}\right)^{2}\right)^{2},
\]
Thus, remainders of $\left(F_{f}(x_{\gamma\left(0\right)},\cdots,x_{\gamma\left(n-1\right)})\right)^{2}$
and $\left(F_{\gamma f\gamma^{-1}}(x_{0},\cdots,x_{n-1})\right)^{2}$
are equal. By the corollary \ref{Quotient_Remainder_Corollary}, the
polynomial $\left(F_{f}\right)^{2}$ admits an expansion of the form
\[
\left(F_{f}(\mathbf{x})\right)^{2}=\sum_{0<k\le d+1-n}\left(\sum_{g_{k}\in\mathbb{Z}_{n+k}^{\mathbb{Z}_{n}}}b_{g_{k}}L_{g_{k}}(\mathbf{x})\right)+\sum_{\begin{array}{c}
\sigma\in\text{S}_{n}\\
G_{\sigma f\sigma^{-1}}\in\text{GrL}\left(G_{f}\right)
\end{array}}\left(F_{f}(\sigma)\right)^{2}L_{\sigma}\left(\mathbf{x}\right),
\]
where $n\le d\le6{n \choose 2}$. Thus the congruence identity relating
$\left(F_{f}(\mathbf{x})\right)^{2}$ to its canonical representative
is
\[
\left(F_{f}(\mathbf{x})\right)^{2}\equiv\sum_{\begin{array}{c}
\sigma\in\text{S}_{n}\\
G_{\sigma f\sigma^{-1}}\in\text{GrL}\left(G_{f}\right)
\end{array}}\left(F_{f}(\sigma)\right)^{2}L_{\sigma}\left(\mathbf{x}\right)\mod\left\{ \left(x_{i}\right)^{\underline{n}}:i\in\mathbb{Z}_{n}\right\} .
\]
The right hand side of the following congruence identities are expressions
of the canonical representative of the corresponding congruence class
\[
\begin{array}{ccc}
\left(F_{f}(x_{\gamma\left(0\right)},\cdots,x_{\gamma\left(n-1\right)})\right)^{2} & \equiv & \underset{\begin{array}{c}
\sigma\in\text{S}_{n}\\
G_{\sigma f\sigma^{-1}}\in\text{GrL}\left(G_{f}\right)
\end{array}}{\sum}\left(F_{f}(\sigma)\right)^{2}\underset{\begin{array}{c}
u\in\mathbb{Z}_{n}\\
v_{u}\in\mathbb{Z}_{n}\backslash\left\{ \sigma\left(u\right)\right\} 
\end{array}}{\prod}\left(\frac{x_{\gamma\left(u\right)}-v_{u}}{\sigma\left(u\right)-v_{u}}\right),\\
\\
\\
 & \equiv & \underset{\begin{array}{c}
\sigma\in\text{S}_{n}\\
G_{\sigma f\sigma^{-1}}\in\text{GrL}\left(G_{f}\right)
\end{array}}{\sum}\left(F_{f}(\sigma)\right)^{2}L_{\sigma\gamma^{-1}}\left(\mathbf{x}\right),\\
\\
\\
 & \equiv & \underset{\begin{array}{c}
\sigma\in\text{S}_{n}\\
G_{\sigma\gamma^{-1}\gamma f\gamma^{-1}\gamma\sigma^{-1}}\in\text{GrL}\left(G_{f}\right)
\end{array}}{\sum}\left(F_{f}(\sigma\gamma^{-1}\gamma)\right)^{2}L_{\sigma\gamma^{-1}}\left(\mathbf{x}\right),\\
\\
\\
 & \equiv & \underset{\begin{array}{c}
\theta\in\text{S}_{n}\\
G_{\theta\gamma f\gamma^{-1}\theta^{-1}}\in\text{GrL}\left(G_{f}\right)
\end{array}}{\sum}\left(F_{f}(\theta\gamma)\right)^{2}L_{\theta}\left(\mathbf{x}\right),\\
\\
\\
 & \equiv & \underset{\begin{array}{c}
\theta\in\text{S}_{n}\\
G_{\theta\gamma f\gamma^{-1}\theta^{-1}}\in\text{GrL}\left(G_{f}\right)
\end{array}}{\sum}\left(F_{\gamma f\gamma^{-1}}(\theta)\right)^{2}L_{\theta}\left(\mathbf{x}\right).
\end{array}
\]
\[
\implies\text{Aut}\left(\sum_{g\in\mathbb{Z}_{n}^{\mathbb{Z}_{n}}}\left(F_{f}(g)\right)^{2}L_{g}\left(\mathbf{x}\right)\right)\supseteq\text{Aut}\left((F_{f})^{2}\right)
\]
By proposition (\ref{Stabiliser_Subgroup}), we have
\[
\text{Aut}\left((F_{f})^{2}\right)=\text{Aut}\left(G_{f}\cup G_{f^{\top}}\backslash\left(r,r\right)\right).
\]
We conclude that 
\[
\text{Aut}\left(\sum_{g\in\mathbb{Z}_{n}^{\mathbb{Z}_{n}}}\left(F_{f}(g)\right)^{2}L_{g}\left(\mathbf{x}\right)\right)\supseteq\text{Aut}\left(G_{f}\cup G_{f^{\top}}\backslash\left(r,r\right)\right),
\]
as claimed.
\end{proof}
As a corollary, given
\[
f\in\left\{ h\in\mathbb{Z}_{n}^{\mathbb{Z}_{n}}:\begin{array}{c}
h\left(0\right)=0\\
h\left(i\right)<i,\,\forall\,i\in\mathbb{Z}_{n}\backslash\left\{ 0\right\} 
\end{array}\right\} ,
\]
subject to $\left|f^{-1}\left(\left\{ f\left(n-1\right)\right\} \right)\right|>1$
and
\[
f^{-1}\left(\left\{ f\left(n-1\right)\right\} \right)=\left\{ n-1,...,n-\left|f^{-1}\left(\left\{ f\left(n-1\right)\right\} \right)\right|\right\} ,
\]
 If $F_{f}\not\equiv0\mod\left\{ \left(x_{u}\right)^{\underline{n}}:u\in\mathbb{Z}_{n}\right\} $
then the transposition which exchanges the variables $x_{u}$ with
$x_{v}$ where $u,v\in f^{-1}\left(\left\{ f\left(n-1\right)\right\} \right)$
necessarily lies in
\[
\text{Aut}\left(\sum_{g\in\mathbb{Z}_{n}^{\mathbb{Z}_{n}}}F_{f}(g)\,L_{g}\left(x_{0},\cdots,x_{n-1}\right)\right).
\]
Furthermore, the support of non-vanishing Lagrange basis polynomials
partitions into disjoint cosets of $\text{Aut}\left(G_{f}\cup G_{f^{\top}}\backslash\left(0,0\right)\right)$
as follows
\[
F_{f}\left(\mathbf{x}\right)\equiv\left(\sum_{g\in\mathbb{Z}_{n}^{\mathbb{Z}_{n}}}F_{f}\left(g\right)L_{g}\left(\mathbf{x}\right)\right)=
\]
\[
\prod_{v\in\mathbb{Z}_{n}}\left(\left(v!\right)^{2}\frac{\left(n-1+v\right)!}{\left(2v\right)!}\right)\sum_{\begin{array}{c}
\sigma\in\nicefrac{\text{S}_{n}}{\text{Aut}\left(G_{f}\cup G_{f^{\top}}\backslash\left(0,0\right)\right)}\\
G_{\sigma f\sigma^{-1}}\in\text{GrL}\left(G_{f}\right)
\end{array}}\sum_{\theta_{\sigma}\in\text{Aut}\left(G_{f}\cup G_{f^{\top}}\backslash\left(0,0\right)\right)}\text{sgn}\left(\left|\left(\sigma\theta_{\sigma}^{-1}\right)f\left(\sigma\theta_{\sigma}^{-1}\right)^{-1}-\text{id}\right|\circ\sigma\theta_{\sigma}^{-1}\right)L_{\sigma\theta_{\sigma}^{-1}}\left(\mathbf{x}\right).
\]
By the complementary labeling symmetry, if
\[
\left(n-1-\text{id}\right)\circ\text{Aut}\left(G_{f}\cup G_{f^{\top}}\backslash\left(r,r\right)\right)\circ\left(n-1-\text{id}\right)^{-1}=\text{Aut}\left(G_{f}\cup G_{f^{\top}}\backslash\left(r,r\right)\right),
\]
then 
\[
\text{Aut}\left(\sum_{g\in\mathbb{Z}_{n}^{\mathbb{Z}_{n}}}\left(F_{f}(g)\right)^{2}L_{g}\left(\mathbf{x}\right)\right)\supseteq\text{Aut}\left(G_{f}\cup G_{f^{\top}}\backslash\left(r,r\right)\right)\cup\text{Aut}\left(G_{f}\cup G_{f^{\top}}\backslash\left(r,r\right)\right)\circ\left(n-1-\text{id}\right).
\]

\begin{example}
As an illustration take the function $f\in\mathbb{Z}_{4}^{\mathbb{Z}_{4}}$
such that
\[
f\left(0\right)=0,\,f\left(1\right)=0,\,f\left(2\right)=1,\,f\left(3\right)=2,
\]
\[
\left(F_{f}(x_{0},x_{1},x_{2},x_{3})\right)^{2}=\prod_{0\le i<j<4}\left(x_{j}-x_{i}\right)^{2}\left(\left(x_{f\left(j\right)}-x_{j}\right)^{2}-\left(x_{f\left(i\right)}-x_{i}\right)^{2}\right)^{2}.
\]
We see that 
\[
\text{Aut}\left(F_{f}(x_{0},x_{1},x_{2},x_{3})\right)^{2}=\left\{ \text{id},\,\sigma_{1}\right\} 
\]
where the directed edge set of the functional digraph $G_{\sigma_{1}}$
is
\[
E\left(G_{\sigma_{1}}\right)=\left\{ \left(0,3\right),\left(1,2\right),\left(2,1\right),\left(3,0\right)\right\} .
\]
We see that
\[
\left(\sum_{g\in\mathbb{Z}_{4}^{\mathbb{Z}_{4}}}\left(F_{f}(g)\right)^{2}L_{g}\left(\mathbf{x}\right)\right)=
\]
\[
74649600\,x_{0}^{3}x_{1}^{3}x_{2}^{3}x_{3}^{3}-335923200\,x_{0}^{3}x_{1}^{3}x_{2}^{3}x_{3}^{2}-335923200\,x_{0}^{3}x_{1}^{3}x_{2}^{2}x_{3}^{3}+\cdots-4031078400\,x_{0}x_{2}x_{3}-4031078400\,x_{1}x_{2}x_{3},
\]
and
\[
\text{Aut}\left(\sum_{g\in\mathbb{Z}_{4}^{\mathbb{Z}_{4}}}\left(F_{f}(g)\right)^{2}L_{g}\left(\mathbf{x}\right)\right)=\left\{ \text{id},\,\sigma_{1},\,\sigma_{2},\sigma_{3}\right\} ,
\]
where the directed edge set of functional digraphs $G_{\sigma_{2}}$
and $G_{\sigma_{3}}$ are respectively
\[
E\left(G_{\sigma_{2}}\right)=\left\{ \left(0,1\right),\left(1,0\right),\left(2,3\right),\left(3,2\right)\right\} ,\:E\left(G_{\sigma_{3}}\right)=\left\{ \left(0,2\right),\left(1,3\right),\left(2,0\right),\left(3,1\right)\right\} 
\]

The example above illustrates a setting where the automorphism group
of $(F_{f})^{2}$ i.e. $\text{Aut}\left((F_{f})^{2}\right)$ is a
proper subgroup of the automorphism group of its canonical representative
i.e.
\[
\text{Aut}\left(\sum_{g\in\mathbb{Z}_{4}^{\mathbb{Z}_{4}}}\left(F_{f}(g)\right)^{2}L_{g}\left(\mathbf{x}\right)\right).
\]
\begin{figure}
\begin{tikzpicture}
\node (3) at (-2,0) {};
\node (2) at (0 ,0) {};
\node (1) at (2 ,0) {};
\node (0) at (4 ,0) {};
\draw[fill=black] (-2,0) circle (3pt);
\draw[fill=black] (0 ,0) circle (3pt);
\draw[fill=black] (2 ,0) circle (3pt);
\draw[fill=black] (4 ,0) circle (3pt);

\node at (-2,0.5) {$3$};
\node at (0 ,0.5) {$2$};
\node at (2 ,0.5) {$1$};
\node at (4 ,0.5) {$0$};

\node at (-3,0) {$G_f =$};

\draw (3) edge[thick,->] (2);
\draw (2) edge[thick,->] (1) edge[thick,->] (0);
\draw (0) edge[thick,->,out=45,in=315,looseness=10] (0);
\end{tikzpicture} \centering \caption{$f\left(0\right)=0,\,f\left(1\right)=0,\,f\left(2\right)=1,\,f\left(3\right)=2$}
\end{figure}
Given $P\in\mathbb{Q}\left[x_{0},\cdots,x_{n-1}\right]$, recall that
$P$ depends on the variable $x_{u}$ if the polynomial $\frac{\partial P}{\partial x_{u}}$
does not vanish identically. In other words the expanded form of $P$
features at least one monomial multiple of the variable $x_{u}$ whose
coefficient does not vanish.
\end{example}

\begin{prop}
\label{prop:dependencies}Let $P\left(x_{0},\cdots,x_{n-1}\right)\in\mathbb{Q}\left[x_{0},\cdots,x_{n-1}\right]$
be dependent only on the subset of variables in
\[
\left\{ x_{i}:i\in S\subsetneq\mathbb{Z}_{n}\right\} ,
\]
If $P\left(\mathbf{x}\right)$ is of degree at most $n-1$ in the
said variables, then for any positive integer $m$ the canonical representative
of 
\[
\left(P(\mathbf{x})\right)^{m}\mod\left\{ \left(x_{i}\right)^{\underline{n}}:i\in S\right\} 
\]
can depend only on variables in the subset $\left\{ x_{i}:i\in S\right\} $.
\end{prop}

\begin{proof}
By our premise $P$ equals it own canonical representative i.e.
\[
P\left(\mathbf{x}\right)=\left(\sum_{g\in\mathbb{Z}_{n}^{\mathbb{Z}_{n}}}P\left(g\right)\,L_{g}\left(\mathbf{x}\right)\right)=\sum_{g\in\mathbb{Z}_{n}^{S}}P\left(g\right)\,\prod_{i\in S}\left(\prod_{j_{i}\in\mathbb{Z}_{n}\backslash\left\{ g\left(i\right)\right\} }\bigg(\frac{x_{i}-j_{i}}{g\left(i\right)-j_{i}}\bigg)\right).
\]
\[
\implies\left(P(\mathbf{x})\right)^{m}=\left(\sum_{g\in\mathbb{Z}_{n}^{S}}P\left(g\right)\,\prod_{i\in S}\left(\prod_{j_{i}\in\mathbb{Z}_{n}\backslash\left\{ g\left(i\right)\right\} }\bigg(\frac{x_{i}-j_{i}}{g\left(i\right)-j_{i}}\bigg)\right)\right)^{m}.
\]
We see that $\left(P(\mathbf{x})\right)^{m}$ depends only on variables
in $\left\{ x_{i}:i\in S\right\} $. The canonical representative
of $\left(P(\mathbf{x})\right)^{m}$ is 
\[
\sum_{g\in\mathbb{Z}_{n}^{\mathbb{Z}_{n}}}\left(P(g)\right)^{m}\,L_{g}\left(\mathbf{x}\right)
\]
 By the proposition \ref{Quotient_Remainder_Expansion}, the canonical
representative of $\left(P(\mathbf{x})\right)^{m}$ can be obtained
by reducing $\left(P(\mathbf{x})\right)^{m}$ modulo algebraic relations
\[
\left\{ \left(x_{i}\right)^{\underline{n}}:i\in S\right\} .
\]
Accordingly, the canonical representative of $\left(P(\mathbf{x})\right)^{m}$
is devised by repeatedly replacing into the expanded form of $\left(P(\mathbf{x})\right)^{m}$
every occurrence of $\left(x_{i}\right)^{n}$ with $\left(x_{i}\right)^{n}-\left(x_{i}\right)^{\underline{n}}$
for all $i\in S$ until we obtain a polynomial of degree $<n$ in
each variable. The reduction procedure never introduces a variable
in the complement of the set $\left\{ x_{i}:i\in S\right\} $. Therefore,
the canonical representative of $\left(P(\mathbf{x})\right)^{m}$
given by
\[
\left(\sum_{g\in\mathbb{Z}_{n}^{\mathbb{Z}_{n}}}\left(P(g)\right)^{m}\,L_{g}\left(\mathbf{x}\right)\right)=\sum_{g\in\mathbb{Z}_{n}^{S}}\big(P(g)\big)^{m}\prod_{i\in S}\left(\prod_{j_{i}\in\mathbb{Z}_{n}\backslash\left\{ g\left(i\right)\right\} }\bigg(\frac{x_{i}-j_{i}}{g\left(i\right)-j_{i}}\bigg)\right).
\]
depends only on variables in $\left\{ x_{i}:i\in S\right\} $ as claimed.
\end{proof}
\begin{lem}[The monomial overlapping lemma]
\label{MnOvLe} Let $\mathbf{x}$ denote the sequence of variables
$(x_{0},\ldots,x_{n-1})$. For any non-empty $\mathcal{S}\subseteq\mathrm{S}_{n}$
such that $a_{\sigma}\in\mathbb{Q}\setminus\{0\}$ for all $\sigma\in\mathcal{S}$,
we have
\[
\sum_{\sigma\in\mathcal{S}}a_{\sigma}\,L_{\sigma}\left(\mathbf{x}\right)=\sum_{f\in\mathcal{M}_{\mathcal{S}}}c_{f}\prod_{i\in\mathbb{Z}_{n}}x_{i}^{f\left(i\right)},
\]
where $c_{f}\in\mathbb{Q}\setminus\{0\}$ and $\left|f^{-1}\left(\left\{ 0\right\} \right)\right|\le1$
for all $f\in\mathcal{M}_{\mathcal{S}}$. The right-hand side above
expresses the left-hand side as a linear combination of monomials.
\end{lem}

\begin{proof}
Stated otherwise, the present lemma asserts that every term in the
expanded form is a multiple of at least $\left(n-1\right)$ distinct
variables. Consider the Lagrange basis polynomial associated with
any $\sigma\in\mathcal{S}$:
\[
L_{\sigma}({\bf x})=\prod_{\substack{i\in\mathbb{Z}_{n}\\
j_{i}\in\mathbb{Z}_{n}\setminus\left\{ \sigma\left(i\right)\right\} 
}
}\left(\frac{x_{i}-j_{i}}{\sigma\left(i\right)-j_{i}}\right)=\prod_{i\in\mathbb{Z}_{n}\setminus\left\{ \sigma^{-1}\left(0\right)\right\} }\left(\prod_{j_{i}\in\mathbb{Z}_{n}\setminus\left\{ \sigma\left(i\right)\right\} }\frac{x_{i}-j_{i}}{\sigma\left(i\right)-j_{i}}\right)\,\prod_{j_{\sigma^{-1}\left(0\right)}\in\mathbb{Z}_{n}\backslash\left\{ 0\right\} }\left(\frac{x_{\sigma^{-1}\left(0\right)}-j}{0-j}\right).
\]
On the right-hand side of the second equal sign immediately above,
the univariate polynomial encompassed within the scope of third $\Pi$
indexed by $j_{\sigma^{-1}\left(0\right)}\in\mathbb{Z}_{n}\backslash\left\{ 0\right\} $
has (in its expanded form) a non-vanishing constant term equal to
one. However, the constant term vanishes within the expanded form
of each univariate factors $\underset{j_{i}\in\mathbb{Z}_{n}\setminus\left\{ \sigma\left(i\right)\right\} }{\prod}\frac{x_{i}-j_{i}}{\sigma\left(i\right)-j_{i}}$
encompassed within the scope of the first $\Pi$ indexed by $i\in\mathbb{Z}_{n}\setminus\left\{ \sigma^{-1}\left(0\right)\right\} $:
\[
L_{\sigma}({\bf x})=\underbrace{\prod_{\substack{i\in\mathbb{Z}_{n}\setminus\left\{ \sigma^{-1}\left(0\right)\right\} \\
j_{i}\in\mathbb{Z}_{n}\setminus\left\{ \sigma\left(i\right)\right\} 
}
}\bigg(\frac{x_{i}-j_{i}}{\sigma\left(i\right)-j_{i}}\bigg)}_{\text{does not feature the variable }x_{\sigma^{-1}\left(0\right)}}\:\bigg(\frac{(x_{\sigma^{-1}\left(0\right)})^{n-1}+\cdots+(-1)^{n-1}(n-1)!}{(-1)^{n-1}(n-1)!}\bigg).
\]
Observe that each summand in the expanded form of the Lagrange basis
polynomial $L_{\sigma}({\bf x})$ above which is a non--vanishing
monomial multiple of $x_{\sigma^{-1}\left(0\right)}$ is a multiple
of every variable in $\left\{ x_{0},\ldots,x_{n-1}\right\} $ whereas
by contrast every non--vanishing monomial which is not a multiple
of $x_{\sigma^{-1}\left(0\right)}$ is a multiple of every other variables
i.e. variables in the set $\left\{ x_{0},\ldots,x_{n-1}\right\} \backslash\left\{ x_{\sigma^{-1}\left(0\right)}\right\} $.
Applying the same argument to each $\sigma\in\mathcal{S}$ yields
the desired claim.
\end{proof}
We now state and prove the composition lemma.
\begin{lem}[The Composition Lemma]
\label{CL} Let $n$ be a positive integer greater than $3$. For
all functions $f\in\mathbb{Z}_{n}^{\mathbb{Z}_{n}}$, If $\left|f^{\left(n-1\right)}\left(\mathbb{Z}_{n}\right)\right|=1$
and $G_{f}$ has diameter $\ge3$, then
\[
\max_{\sigma\in\text{S}_{n}}\left|\left\{ \left|\sigma f^{\left(2\right)}\sigma^{(-1)}\left(i\right)-i\right|:i\in\mathbb{Z}_{n}\right\} \right|\le\max_{\sigma\in\text{S}_{n}}\left|\left\{ \left|\sigma f\sigma^{(-1)}\left(i\right)-i\right|:i\in\mathbb{Z}_{n}\right\} \right|.
\]
\end{lem}

\begin{proof}
Assume without loss of generality $f$ lies in the semigroup
\[
\left\{ h\in\mathbb{Z}_{n}^{\mathbb{Z}_{n}}:\begin{array}{c}
h\left(0\right)=0\\
h\left(i\right)<i,\,\forall\,i\in\mathbb{Z}_{n}\backslash\left\{ 0\right\} 
\end{array}\right\} .
\]
For we see that if $f\in\mathbb{Z}_{n}^{\mathbb{Z}_{n}}$ subject
to $\left|f^{\left(n-1\right)}\left(\mathbb{Z}_{n}\right)\right|=1$,
does not lie in the semigroup, there exist a permutation $\sigma\in\text{S}_{n}$
of vertex labels in $G_{f}$ by which we devise $G_{\sigma f\sigma^{-1}}$
isomorphic to $G_{f}$ such that
\[
\sigma f\sigma^{-1}\in\left\{ h\in\mathbb{Z}_{n}^{\mathbb{Z}_{n}}:\begin{array}{c}
h\left(0\right)=0\\
h\left(i\right)<i,\,\forall\,i\in\mathbb{Z}_{n}\backslash\left\{ 0\right\} 
\end{array}\right\} .
\]
Thus functional directed graphs of members of the semigroup
\[
\left\{ h\in\mathbb{Z}_{n}^{\mathbb{Z}_{n}}:\begin{array}{c}
h\left(0\right)=0\\
h\left(i\right)<i,\,\forall\,i\in\mathbb{Z}_{n}\backslash\left\{ 0\right\} 
\end{array}\right\} ,
\]
account for at least one member of every conjugacy class of functional
trees. Observe that for any $f$ in the semigroup the iterate $f^{\left(2^{\left\lceil \log_{2}\left(n-1\right)\right\rceil }\right)}$
is the identically zero function. Hence, the existence of some function
\[
g\in\left\{ h\in\mathbb{Z}_{n}^{\mathbb{Z}_{n}}:\begin{array}{c}
h\left(0\right)=0\\
h\left(i\right)<i,\,\forall\,i\in\mathbb{Z}_{n}\backslash\left\{ 0\right\} 
\end{array}\right\} \text{ such that }n>\max_{\sigma\in\text{S}_{n}}\left|\left\{ \left|\sigma g\sigma^{(-1)}\left(i\right)-i\right|:i\in\mathbb{Z}_{n}\right\} \right|,
\]
implies the existence of some function
\[
f\in\left\{ g^{\left(2^{\kappa}\right)}\,:\,0\le\kappa<\left\lceil \log_{2}\left(n-1\right)\right\rceil \right\} \subset\left\{ h\in\mathbb{Z}_{n}^{\mathbb{Z}_{n}}:\begin{array}{c}
h\left(0\right)=0\\
h\left(i\right)<i,\,\forall\,i\in\mathbb{Z}_{n}\backslash\left\{ 0\right\} 
\end{array}\right\} ,
\]
such that
\[
\max_{\sigma\in\text{S}_{n}}\left|\left\{ \left|\sigma f^{\left(2\right)}\sigma^{(-1)}\left(i\right)-i\right|:i\in\mathbb{Z}_{n}\right\} \right|>\max_{\sigma\in\text{S}_{n}}\left|\left\{ \left|\sigma f\sigma^{(-1)}\left(i\right)-i\right|:i\in\mathbb{Z}_{n}\right\} \right|.
\]
Consequently if the purported claim of lemma \ref{CL} holds, then
there can be no member $g$ of the semigroup for which 
\[
n>\max_{\sigma\in\text{S}_{n}}\left|\left\{ \left|\sigma g\sigma^{(-1)}\left(i\right)-i\right|:i\in\mathbb{Z}_{n}\right\} \right|.
\]
We now proceed to prove a generalization of the desired claim. Assume
without loss of generality that the vertex labeled $n-1$ is at maximum
edge distance from the vertex labeled $0$ (the root node) in $G_{f}$.
Also assume without loss of generality that vertices in the set $f^{-1}\left(\left\{ f\left(n-1\right)\right\} \right)$
(namely the vertex set made up of $n-1$ and its sibling nodes in
$G_{f}$) are assigned the largest possible labels from $\mathbb{Z}_{n}$
in other words
\[
f^{-1}\left(\left\{ f\left(n-1\right)\right\} \right)=\left\{ n-1,\,n-2,\,...,\,n-\left|f^{-1}\left(\left\{ f\left(n-1\right)\right\} \right)\right|\right\} \;\text{ and}
\]
\[
f\left(n-\left|f^{-1}\left(\left\{ f\left(n-1\right)\right\} \right)\right|\right)=n-\left|f^{-1}\left(\left\{ f\left(n-1\right)\right\} \right)\right|-1.
\]
For if this was not the case, we can effect a permutation $\sigma$
of vertex labels in $G_{f}$ to devise an isomorphic functional directed
graph $G_{\sigma f\sigma^{-1}}$ such that $\sigma f\sigma^{-1}$
lies in the semigroup and both latter conditions are met. Now consider
the local iteration which devises from $G_{f}$ a new functional directed
graph $G_{g}$ obtained by sliding one edge length closer to the root
node the collection of sibling nodes which include the vertex labeled
$n-1$ i.e. vertices in the set $f^{-1}\left(\left\{ f\left(n-1\right)\right\} \right)$.
More precisely, $G_{g}$ is the functional directed graph of $g$
defined such that for all $i\in\mathbb{Z}_{n}$ 
\[
g\left(i\right)=\begin{cases}
\begin{array}{cc}
f^{\left(2\right)}\left(i\right) & \text{ if }i\in f^{-1}\left(\left\{ f\left(n-1\right)\right\} \right)\\
f\left(i\right) & \text{otherwise}
\end{array},\ \forall\,i\in\mathbb{Z}_{n}.\end{cases}
\]
By construction
\[
g\in\left\{ h\in\mathbb{Z}_{n}^{\mathbb{Z}_{n}}:\begin{array}{c}
h\left(0\right)=0\\
h\left(i\right)<i,\,\forall\,i\in\mathbb{Z}_{n}\backslash\left\{ 0\right\} 
\end{array}\right\} .
\]
We show that 
\[
\max_{\sigma\in\text{S}_{n}}\left|\left\{ \left|\sigma g\sigma^{(-1)}\left(i\right)-i\right|:i\in\mathbb{Z}_{n}\right\} \right|\le\max_{\sigma\in\text{S}_{n}}\left|\left\{ \left|\sigma f\sigma^{(-1)}\left(i\right)-i\right|:i\in\mathbb{Z}_{n}\right\} \right|.
\]
The inequality immediately above generalizes the claim of the lemma
\ref{CL} since in the latter inequality, the function $f$ is only
partially iterated. More precisely $f$ is iterated only on the restriction
$f^{-1}\left(\left\{ f\left(n-1\right)\right\} \right)\subset\mathbb{Z}_{n}$.
It is easy to see that the claim of lemma \ref{CL} follows from this
generalization. We prove by contradiction the contrapositive claim
i.e. $\emptyset=\text{GrL}\left(G_{f}\right)\Longrightarrow\text{GrL}\left(G_{g}\right)=\emptyset$.
By construction the polynomial
\[
\begin{array}{cc}
P_{f}\left(\mathbf{x}\right)=\underset{0\le i<j<n}{\prod}\left(x_{j}-x_{i}\right)\times\\
\underset{\begin{array}{c}
0\le u<v\le f\left(n-1\right)\\
t\in\left\{ 0,1\right\} 
\end{array}}{\prod} & \bigg(x_{f\left(v\right)}-x_{v}+\left(-1\right)^{t}(x_{f\left(u\right)}-x_{u})\bigg)\times\\
\underset{\begin{array}{c}
v\in f^{-1}\left(\left\{ f\left(n-1\right)\right\} \right)\\
0\le u\le f\left(n-1\right)\\
t\in\left\{ 0,1\right\} 
\end{array}}{\prod} & \bigg(x_{f\left(v\right)}-x_{v}+\left(-1\right)^{t}(x_{f\left(u\right)}-x_{u})\bigg)\times\\
\underset{\begin{array}{c}
v\in f^{-1}\left(\left\{ f\left(n-1\right)\right\} \right)\\
f\left(n-1\right)<u<v\\
t\in\left\{ 0,1\right\} 
\end{array}}{\prod} & \bigg(x_{f\left(v\right)}-x_{v}+\left(-1\right)^{t}(x_{f\left(u\right)}-x_{u})\bigg),
\end{array}
\]
differs only slightly from
\[
\begin{array}{ccc}
 & P_{g}\left(\mathbf{x}\right)=\underset{0\le i<j<n}{\prod}\left(x_{j}-x_{i}\right)\times\\
 & \underset{\begin{array}{c}
0\le u<v\le f\left(n-1\right)\\
t\in\left\{ 0,1\right\} 
\end{array}}{\prod} & \bigg(x_{f\left(v\right)}-x_{v}+\left(-1\right)^{t}(x_{f\left(u\right)}-x_{u})\bigg)\times\\
 & \underset{\begin{array}{c}
v\in f^{-1}\left(\left\{ f\left(n-1\right)\right\} \right)\\
0\le u\le f\left(n-1\right)\\
t\in\left\{ 0,1\right\} 
\end{array}}{\prod} & \bigg(x_{f^{\left(2\right)}\left(v\right)}-x_{v}+\left(-1\right)^{t}(x_{f\left(u\right)}-x_{u})\bigg)\times\\
 & \underset{\begin{array}{c}
v\in f^{-1}\left(\left\{ f\left(n-1\right)\right\} \right)\\
f\left(n-1\right)<u<v\\
t\in\left\{ 0,1\right\} 
\end{array}}{\prod} & \bigg(x_{f^{\left(2\right)}\left(v\right)}-x_{v}+\left(-1\right)^{t}(x_{f^{\left(2\right)}\left(u\right)}-x_{u})\bigg).
\end{array}
\]
We setup a variable \textbf{telescoping} within each binomial $x_{f^{\left(2\right)}\left(v\right)}-x_{v}$
for all $v\in f^{-1}\left(\left\{ f\left(n-1\right)\right\} \right)$
(associated with an iterated edge) as follows
\[
\begin{array}{c}
\underbrace{{\color{blue}(}x_{f^{\left(2\right)}\left(v\right)}-x_{v}{\color{red})}}\\
x_{v}\longrightarrow x_{f^{\left(2\right)}\left(v\right)}
\end{array}=\begin{array}{c}
\underbrace{{\color{red}(x_{f\left(v\right)}}-x_{v}{\color{red})}}\\
x_{v}\longrightarrow{\color{red}x_{f\left(v\right)}}
\end{array}+\begin{array}{c}
\underbrace{{\color{blue}(}x_{f^{\left(2\right)}\left(v\right)}{\color{blue}-x_{f\left(v\right)}}{\color{blue})}}\\
{\color{blue}x_{f\left(v\right)}}\longrightarrow x_{f^{\left(2\right)}\left(v\right)}
\end{array},
\]
\[
{\color{blue}(}x_{f^{\left(2\right)}\left(v\right)}-x_{v}{\color{red})}={\color{blue}(}x_{f^{\left(2\right)}\left(v\right)}{\color{blue}-x_{f\left(v\right)}}{\color{blue})}+{\color{red}(x_{f\left(v\right)}}-x_{v}{\color{red})}={\color{blue}(}x_{f^{\left(2\right)}\left(n-1\right)}{\color{blue}-x_{f\left(n-1\right)}}{\color{blue})}+{\color{red}(x_{f\left(v\right)}}-x_{v}{\color{red})},
\]
the last equality immediately above results from the fact that $f\left(v\right)=f\left(n-1\right)$
for all $v\in f^{-1}\left(\left\{ f(n-1)\right\} \right)$. Thus
\[
\begin{array}{c}
P_{g}=\underset{0\le i<j<n}{\prod}\left(x_{j}-x_{i}\right)\times\\
\underset{\begin{array}{c}
0\le u<v\le f\left(n-1\right)\\
t\in\left\{ 0,1\right\} 
\end{array}}{\prod}\bigg(x_{f\left(v\right)}-x_{v}+\left(-1\right)^{t}(x_{f\left(u\right)}-x_{u})\bigg)\times\\
\underset{\begin{array}{c}
v\in f^{-1}\left(\left\{ f\left(n-1\right)\right\} \right)\\
0\le u\le f\left(n-1\right)\\
t\in\left\{ 0,1\right\} 
\end{array}}{\prod}\bigg({\color{blue}(}x_{f^{\left(2\right)}\left(n-1\right)}{\color{blue}-x_{f\left(n-1\right)}}{\color{blue})}+{\color{red}(x_{f\left(v\right)}}-x_{v}{\color{red})}+\left(-1\right)^{t}(x_{f\left(u\right)}-x_{u})\bigg)\times\\
\underset{\begin{array}{c}
v\in f^{-1}\left(\left\{ f\left(n-1\right)\right\} \right)\\
f\left(n-1\right)<u<v\\
t\in\left\{ 0,1\right\} 
\end{array}}{\prod}\left({\color{blue}(}x_{f^{\left(2\right)}\left(n-1\right)}{\color{blue}-x_{f\left(n-1\right)})}+{\color{red}(x_{f\left(v\right)}}-x_{v}{\color{red})}+\left(-1\right)^{t}\big({\color{blue}(}x_{f^{\left(2\right)}\left(n-1\right)}{\color{blue}-x_{f\left(n-1\right)})}+{\color{red}(x_{f\left(u\right)}}-x_{u}{\color{red})}\big)\right).
\end{array}
\]
For notational convenience we write 
\begin{equation}
\begin{array}{ccc}
P_{g} & = & \underset{0\le i<j<n}{\prod}\left(x_{j}-x_{i}\right)\underset{0\le u<v\le f\left(n-1\right)}{\prod}\left((x_{f\left(v\right)}-x_{v})^{2}-(x_{f\left(u\right)}-x_{u})^{2}\right)\underset{\begin{array}{c}
v\in f^{-1}\left(\left\{ f\left(n-1\right)\right\} \right)\\
0\le u\le f\left(n-1\right)\\
t\in\left\{ 0,1\right\} 
\end{array}}{\prod}\left({\color{red}b_{u,v,t}}+{\color{blue}a_{f\left(n-1\right)}}\right)\times\\
\\
 &  & \underset{\begin{array}{c}
v\in f^{-1}\left(\left\{ f\left(n-1\right)\right\} \right)\\
f\left(n-1\right)<u<v
\end{array}}{\prod}\left({\color{red}b_{v}}+{\color{red}b_{u}}+2{\color{blue}a_{f\left(n-1\right)}}\right)\underset{\begin{array}{c}
v\in f^{-1}\left(\left\{ f\left(n-1\right)\right\} \right)\\
f\left(n-1\right)<u<v
\end{array}}{\prod}\left({\color{red}b_{v}}-{\color{red}b_{u}}+0{\color{blue}a_{f\left(n-1\right)}}\right).
\end{array}\label{telescoping}
\end{equation}
where ${\color{blue}a_{f\left(n-1\right)}}={\color{blue}(}x_{f^{\left(2\right)}\left(n-1\right)}{\color{blue}-x_{f\left(n-1\right)})}$,
${\color{red}b_{i}}={\color{red}(x_{f\left(i\right)}}-x_{i}{\color{red})}$
for all $i\in f^{-1}\left(\left\{ f\left(n-1\right)\right\} \right)$
and
\[
{\color{red}b_{u,v,t}}={\color{red}(x_{f\left(v\right)}}-x_{v}{\color{red})}+\left(-1\right)^{t}(x_{f\left(u\right)}-x_{u}),\ \forall\:\begin{array}{c}
v\in f^{-1}\left(\left\{ f\left(n-1\right)\right\} \right)\\
0\le u\le f\left(n-1\right)\\
t\in\left\{ 0,1\right\} 
\end{array}.
\]
Invoking the \textbf{multi--binomial} identity on the two bichromatic
factors of $P_{g}$ immediately above yields equalities
\[
\prod_{\begin{array}{c}
v\in f^{-1}\left(\left\{ f\left(n-1\right)\right\} \right)\\
f\left(n-1\right)<u<v
\end{array}}\left({\color{red}b_{v}}+{\color{red}b_{u}}+2{\color{blue}a_{f\left(n-1\right)}}\right)=
\]
\[
\bigg(\prod_{\begin{array}{c}
v\in f^{-1}\left(\left\{ f\left(n-1\right)\right\} \right)\\
f\left(n-1\right)<u<v
\end{array}}\big({\color{red}b_{v}}+{\color{red}b_{u}}\big)+\sum_{\begin{array}{c}
r_{u,v}\in\left\{ 0,1\right\} \\
0=\prod r_{u,v}
\end{array}}\prod_{\begin{array}{c}
v\in f^{-1}\left(\left\{ f\left(n-1\right)\right\} \right)\\
f\left(n-1\right)<u<v
\end{array}}\big({\color{red}b_{v}}+{\color{red}b_{u}}\big)^{r_{u,v}}\,\big(2{\color{blue}a_{f\left(n-1\right)}}\big)^{1-r_{u,v}}\bigg)
\]
and 
\[
\prod_{\begin{array}{c}
v\in f^{-1}\left(\left\{ f\left(n-1\right)\right\} \right)\\
0\le u\le f\left(n-1\right)\\
t\in\left\{ 0,1\right\} 
\end{array}}\left({\color{red}b_{u,v,t}}+{\color{blue}a_{f\left(n-1\right)}}\right)=
\]
\[
\bigg(\prod_{\begin{array}{c}
t\in\left\{ 0,1\right\} \\
v\in f^{-1}\left(\left\{ f\left(n-1\right)\right\} \right)\\
0\le u\le f\left(n-1\right)
\end{array}}{\color{red}b_{u,v,t}}+\sum_{\begin{array}{c}
s_{u,v,t}\in\left\{ 0,1\right\} \\
0=\prod s_{u,v,t}
\end{array}}\prod_{\begin{array}{c}
v\in f^{-1}\left(\left\{ f\left(n-1\right)\right\} \right)\\
0\le u\le f\left(n-1\right)\\
t\in\left\{ 0,1\right\} 
\end{array}}\big({\color{red}b_{u,v,t}}\big)^{s_{u,v,t}}\,\big({\color{blue}a_{f\left(n-1\right)}}\big)^{1-s_{u,v,t}}\bigg).
\]
Substituting equalities immediately above into equation (\ref{telescoping})
yields and expression of $P_{g}$ of the form
\[
P_{g}={\color{red}P_{f}}+R_{f,g}.
\]
The first part of the expansion is equal to ${\color{red}P_{f}}$
and collects the monochromatic red parts of the multi-binomial summation
and is given by
\[
\begin{array}{cc}
{\color{red}P_{f}}= & \underset{0\le i<j<n}{\prod}\left(x_{j}-x_{i}\right)\underset{0\le u<v\le f\left(n-1\right)}{\prod}\left((x_{f\left(v\right)}-x_{v})^{2}-(x_{f\left(u\right)}-x_{u})^{2}\right)\underset{\begin{array}{c}
v\in f^{-1}\left(\left\{ f\left(n-1\right)\right\} \right)\\
f\left(n-1\right)<u<v
\end{array}}{\prod}\left({\color{red}b_{v}}-{\color{red}b_{u}}\right)\times\\
 & \bigg(\underset{\begin{array}{c}
v\in f^{-1}\left(\left\{ f\left(n-1\right)\right\} \right)\\
f\left(n-1\right)<u<v
\end{array}}{\prod}\big({\color{red}b_{v}}+{\color{red}b_{u}}\big)\bigg)\,\bigg(\underset{\begin{array}{c}
t\in\left\{ 0,1\right\} \\
v\in f^{-1}\left(\left\{ f\left(n-1\right)\right\} \right)\\
0\le u\le f\left(n-1\right)
\end{array}}{\prod}{\color{red}b_{u,v,t}}\bigg).
\end{array}
\]
 The second part denoted $R_{f,g}$ simply collects all other remaining
multi-binomial summands and is given by
\[
\begin{array}{ccc}
R_{f,g} & = & \underset{0\le i<j<n}{\prod}\left(x_{j}-x_{i}\right)\underset{0\le u<v\le f\left(n-1\right)}{\prod}\left((x_{f\left(v\right)}-x_{v})^{2}-(x_{f\left(u\right)}-x_{u})^{2}\right)\underset{\begin{array}{c}
v\in f^{-1}\left(\left\{ f\left(n-1\right)\right\} \right)\\
f\left(n-1\right)<u<v
\end{array}}{\prod}\left({\color{red}b_{v}}-{\color{red}b_{u}}\right)\times\\
 &  & \left[\bigg(\underset{\begin{array}{c}
v\in f^{-1}\left(\left\{ f\left(n-1\right)\right\} \right)\\
f\left(n-1\right)<u<v
\end{array}}{\prod}\big({\color{red}b_{v}}+{\color{red}b_{u}}\big)\bigg)\right.\bigg(\underset{\begin{array}{c}
s_{u,v,t}\in\left\{ 0,1\right\} \\
0=\prod s_{u,v,t}
\end{array}}{\sum}\underset{\begin{array}{c}
v\in f^{-1}\left(\left\{ f\left(n-1\right)\right\} \right)\\
0\le u\le f\left(n-1\right)\\
t\in\left\{ 0,1\right\} 
\end{array}}{\prod}\big({\color{red}b_{u,v,t}}\big)^{s_{u,v,t}}\,\big({\color{blue}a_{f\left(n-1\right)}}\big)^{1-s_{u,v,t}}\bigg)+\\
\\
 &  & \bigg(\underset{\begin{array}{c}
t\in\left\{ 0,1\right\} \\
v\in f^{-1}\left(\left\{ f\left(n-1\right)\right\} \right)\\
0\le u\le f\left(n-1\right)
\end{array}}{\prod}{\color{red}b_{u,v,t}}\bigg)\bigg(\underset{\begin{array}{c}
r_{u,v}\in\left\{ 0,1\right\} \\
0=\prod r_{u,v}
\end{array}}{\sum}\underset{\begin{array}{c}
v\in f^{-1}\left(\left\{ f\left(n-1\right)\right\} \right)\\
f\left(n-1\right)<u<v
\end{array}}{\prod}\big({\color{red}b_{v}}+{\color{red}b_{u}}\big)^{r_{u,v}}\,\big(2{\color{blue}a_{f\left(n-1\right)}}\big)^{1-r_{u,v}}\bigg)+\\
\\
 &  & \bigg(\underset{\begin{array}{c}
s_{u,v,t}\in\left\{ 0,1\right\} \\
0=\prod s_{u,v,t}
\end{array}}{\sum}\underset{\begin{array}{c}
v\in f^{-1}\left(\left\{ f\left(n-1\right)\right\} \right)\\
0\le u\le f\left(n-1\right)\\
t\in\left\{ 0,1\right\} 
\end{array}}{\prod}\big({\color{red}b_{u,v,t}}\big)^{s_{u,v,t}}\,\big({\color{blue}a_{f\left(n-1\right)}}\big)^{1-s_{u,v,t}}\bigg)\times\\
 &  & \left.\bigg(\underset{\begin{array}{c}
r_{u,v}\in\left\{ 0,1\right\} \\
0=\prod r_{u,v}
\end{array}}{\sum}\underset{\begin{array}{c}
v\in f^{-1}\left(\left\{ f\left(n-1\right)\right\} \right)\\
f\left(n-1\right)<u<v
\end{array}}{\prod}\big({\color{red}b_{v}}+{\color{red}b_{u}}\big)^{r_{u,v}}\,\big(2{\color{blue}a_{f\left(n-1\right)}}\big)^{1-r_{u,v}}\bigg)\right]
\end{array}
\]
The color scheme introduced here is meant to help track the location
of telescoping variables. We now proceed with the main \textbf{contradiction
argument}. Assume for the sake of establishing a contradiction that
the contrapositive claim is false i.e. for some $f$ subject to conditions
described in our premise, we have 
\[
0\equiv{\color{red}{\color{red}P_{f}}}\text{ mod}\left\{ \left(x_{i}\right)^{\underline{n}}:i\in\mathbb{Z}_{n}\right\} \;\text{ and }\;0\not\equiv P_{g}\text{ mod}\left\{ \left(x_{i}\right)^{\underline{n}}:i\in\mathbb{Z}_{n}\right\} .
\]
\[
P_{g}={\color{red}P_{f}}+R_{f,g}\implies P_{g}\equiv R_{f,g}\not\equiv0\mod\left\{ \left(x_{i}\right)^{\underline{n}}:i\in\mathbb{Z}_{n}\right\} .
\]
Observe that every summand in $R_{f,g}$ is a multiple of a positive
power of the binomial ${\color{blue}(}x_{f^{\left(2\right)}\left(n-1\right)}{\color{blue}-x_{f\left(n-1\right)})}$.
We focus in particular on the summand within in $R_{f,g}$ which is
a multiple of the largest possible power of the binomial ${\color{blue}(}x_{f^{\left(2\right)}\left(n-1\right)}{\color{blue}-x_{f\left(n-1\right)})}$.
Namely the summand associated with binary exponent assignments
\[
s_{u,v,t}=0,\ \forall\;\begin{array}{c}
v\in f^{-1}\left(\left\{ f\left(n-1\right)\right\} \right)\\
0\le u\le f\left(n-1\right)\\
t\in\left\{ 0,1\right\} 
\end{array}\text{ as well as }r_{u,v}=0,\ \forall\;\begin{array}{c}
v\in f^{-1}\left(\left\{ f\left(n-1\right)\right\} \right)\\
0\le u\le f\left(n-1\right)
\end{array}.
\]
The said summand is
\[
c\,\underset{0\le i<j<n}{\prod}\left(x_{j}-x_{i}\right)\prod_{0\le u<v\le f\left(n-1\right)}\left((x_{f\left(v\right)}-x_{v})^{2}-(x_{f\left(u\right)}-x_{u})^{2}\right)\bigg(\prod_{\begin{array}{c}
v\in f^{-1}\left(\left\{ f\left(n-1\right)\right\} \right)\\
f\left(n-1\right)<u<v
\end{array}}\left({\color{red}b_{v}}-{\color{red}b_{u}}\right)\bigg)\left({\color{blue}a_{f\left(n-1\right)}}\right)^{m},
\]
where
\[
m=\left|\left\{ \begin{array}{c}
v\in f^{-1}\left(\left\{ f\left(n-1\right)\right\} \right)\\
f\left(n-1\right)<u<v
\end{array}\right\} \right|+\left|\left\{ \begin{array}{c}
v\in f^{-1}\left(\left\{ f\left(n-1\right)\right\} \right)\\
0\le u\le f\left(n-1\right)\\
t\in\left\{ 0,1\right\} 
\end{array}\right\} \right|\:\text{ and }\:c=2^{\left|\left\{ \begin{array}{c}
v\in f^{-1}\left(\left\{ f\left(n-1\right)\right\} \right)\\
f\left(n-1\right)<u<v
\end{array}\right\} \right|}.
\]
The said summand is thus given by
\[
c\,\underset{0\le i<j<n}{\prod}\left(x_{j}-x_{i}\right)\prod_{0\le u<v\le f\left(n-1\right)}\left((x_{f\left(v\right)}-x_{v})^{2}-(x_{f\left(u\right)}-x_{u})^{2}\right)\bigg(\prod_{\begin{array}{c}
v\in f^{-1}\left(\left\{ f\left(n-1\right)\right\} \right)\\
f\left(n-1\right)<u<v
\end{array}}\left(x_{u}-x_{v}\right)\bigg){\color{blue}(}x_{f^{\left(2\right)}\left(n-1\right)}{\color{blue}-x_{f\left(n-1\right)})}^{m}.
\]
It follows from the premise $0\not\equiv\left(P_{g}\text{ mod}\left\{ \left(x_{i}\right)^{\underline{n}}:i\in\mathbb{Z}_{n}\right\} \right)$
that the canonical representative of the chosen summand is non--vanishing.
Observe that the factor 
\[
\underset{0\le i<j<n}{\prod}\left(x_{j}-x_{i}\right)\prod_{0\le u<v\le f\left(n-1\right)}\left((x_{f\left(v\right)}-x_{v})^{2}-(x_{f\left(u\right)}-x_{u})^{2}\right)\prod_{\begin{array}{c}
v\in f^{-1}\left(\left\{ f\left(n-1\right)\right\} \right)\\
f\left(n-1\right)<u<v
\end{array}}\left({\color{red}b_{v}}-{\color{red}b_{u}}\right)
\]
is common to every summand in $R_{f,g}$. Factoring the common factor
we write
\[
R_{f,g}\left(\mathbf{x}\right)=\left(\underset{0\le i<j<n}{\prod}\left(x_{j}-x_{i}\right)\prod_{0\le u<v\le f\left(n-1\right)}\left((x_{f\left(v\right)}-x_{v})^{2}-(x_{f\left(u\right)}-x_{u})^{2}\right)\prod_{\begin{array}{c}
v\in f^{-1}\left(\left\{ f\left(n-1\right)\right\} \right)\\
f\left(n-1\right)<u<v
\end{array}}\left({\color{red}b_{v}}-{\color{red}b_{u}}\right)\right)\,Q_{f,g}\left(\mathbf{x}\right),
\]
where
\[
\begin{array}{ccc}
Q_{f,g} & = & \bigg(\underset{\begin{array}{c}
v\in f^{-1}\left(\left\{ f\left(n-1\right)\right\} \right)\\
f\left(n-1\right)<u<v
\end{array}}{\prod}\big({\color{red}b_{v}}+{\color{red}b_{u}}\big)\bigg)\bigg(\underset{\begin{array}{c}
s_{u,v,t}\in\left\{ 0,1\right\} \\
0=\prod s_{u,v,t}
\end{array}}{\sum}\underset{\begin{array}{c}
v\in f^{-1}\left(\left\{ f\left(n-1\right)\right\} \right)\\
0\le u\le f\left(n-1\right)\\
t\in\left\{ 0,1\right\} 
\end{array}}{\prod}\big({\color{red}b_{u,v,t}}\big)^{s_{u,v,t}}\,\big({\color{blue}a_{f\left(n-1\right)}}\big)^{1-s_{u,v,t}}\bigg)+\\
\\
 &  & \bigg(\underset{\begin{array}{c}
t\in\left\{ 0,1\right\} \\
v\in f^{-1}\left(\left\{ f\left(n-1\right)\right\} \right)\\
0\le u\le f\left(n-1\right)
\end{array}}{\prod}{\color{red}b_{u,v,t}}\bigg)\bigg(\underset{\begin{array}{c}
r_{u,v}\in\left\{ 0,1\right\} \\
0=\prod r_{u,v}
\end{array}}{\sum}\underset{\begin{array}{c}
v\in f^{-1}\left(\left\{ f\left(n-1\right)\right\} \right)\\
f\left(n-1\right)<u<v
\end{array}}{\prod}\big({\color{red}b_{v}}+{\color{red}b_{u}}\big)^{r_{u,v}}\,\big(2{\color{blue}a_{f\left(n-1\right)}}\big)^{1-r_{u,v}}\bigg)+\\
\\
 &  & \bigg(\underset{\begin{array}{c}
s_{u,v,t}\in\left\{ 0,1\right\} \\
0=\prod s_{u,v,t}
\end{array}}{\sum}\underset{\begin{array}{c}
v\in f^{-1}\left(\left\{ f\left(n-1\right)\right\} \right)\\
0\le u\le f\left(n-1\right)\\
t\in\left\{ 0,1\right\} 
\end{array}}{\prod}\big({\color{red}b_{u,v,t}}\big)^{s_{u,v,t}}\,\big({\color{blue}a_{f\left(n-1\right)}}\big)^{1-s_{u,v,t}}\bigg)\times\\
 &  & \bigg(\underset{\begin{array}{c}
r_{u,v}\in\left\{ 0,1\right\} \\
0=\prod r_{u,v}
\end{array}}{\sum}\underset{\begin{array}{c}
v\in f^{-1}\left(\left\{ f\left(n-1\right)\right\} \right)\\
f\left(n-1\right)<u<v
\end{array}}{\prod}\big({\color{red}b_{v}}+{\color{red}b_{u}}\big)^{r_{u,v}}\,\big(2{\color{blue}a_{f\left(n-1\right)}}\big)^{1-r_{u,v}}\bigg).
\end{array}
\]
Let 
\[
\Phi(g):=\sigma\in\left\{ \theta\in\text{S}_{n}:G_{\theta g\theta^{-1}}\in\text{GrL}(G_{g})\right\} .
\]
Observe that for each $\sigma\in\Phi(g)$ there is a non--vanishing
integer evaluation
\[
v_{\sigma}=\underset{0\le i<j<n}{\prod}\left(\sigma\left(j\right)-\sigma\left(i\right)\right)\prod_{0\le u<v\le f\left(n-1\right)}\left(\big(\sigma f(v)-\sigma(v)\big)^{2}-\big(\sigma f(u)-\sigma(u)\big)^{2}\right)\prod_{\begin{array}{c}
v\in f^{-1}\left(\left\{ f\left(n-1\right)\right\} \right)\\
f\left(n-1\right)<u<v
\end{array}}\left(\sigma\left(u\right)-\sigma\left(v\right)\right).
\]
More generally
\[
P_{g}(h)=R_{f,g}\left(h\right)=v_{h}\,Q_{f,g}\left(h\right)=\begin{cases}
\begin{array}{cc}
\text{sgn}(\left|hgh^{-1}-\text{id}\right|\circ h)\underset{0\le i<j<n}{\prod}(j-i)\big(j^{2}-i^{2}) & \text{ if }h\in\Phi(g)\\
\\
0 & \text{otherwise}
\end{array},\forall\:h\in\mathbb{Z}_{n}^{\mathbb{Z}_{n}}.\end{cases}
\]
Recall $Q_{f,g}$ is the remaining factor of $R_{f,g}$ excluding
the common factor. Specifically, $Q_{f,g}$ is a polynomial resulting
from the sum over chromatic summands resulting from the multibinomial
expansions. Let us view $Q_{f,g}$ as a sum of $|\Sigma|$ summands
and denote by $Q_{f,g}^{\left[s\right]}$ the summand $1\leq s\leq|\Sigma|$
of $Q_{f,g}$. By Proposition \textbackslash ref\{prop:ring-homomorphism\},
we can write the remainder of $R_{f,g}$ as follows:
\[
R_{f,g}=\sum_{1\leq s\leq\left|\Sigma\right|}\bigg(\sum_{\sigma\in\Phi(g)}v_{\sigma}\cdot Q_{f,g}^{\left[s\right]}\left(\sigma\right)\cdot L_{\sigma}\left({\bf x}\right)\bigg),
\]
Let us denote by $L_{\sigma}\big({\bf x}_{Q}^{\left[s\right]}\big)$
the factors of $L_{\sigma}\left({\bf x}\right)$ associated with variables
present in $Q_{f,g}^{\left[s\right]}$. Then the evaluations of $R_{f,g}$
over the sublattice $\Phi(g)$ cannot be distinguished from evaluations
of the polynomial
\[
\sum_{1\leq s\leq\left|\Sigma\right|}\bigg(\sum_{\sigma\in\Phi(g)}v_{\sigma}\cdot Q_{f,g}^{\left[s\right]}\left(\sigma\right)\cdot L_{\sigma}\big({\bf x}_{Q}^{\left[s\right]}\big)\bigg)
\]
over the same sublattice. By the transposition invariance, the premise
$P_{g}\equiv R_{f,g}$ implies that any transposition $\tau\in\text{S}_{n}$
which exchanges $x_{f(n-1)}$ with $x_{v}$ where $v\in f^{-1}\left(\left\{ f\left(n-1\right)\right\} \right)$
fixes the remainder of $R_{f,g}$, i.e., for all $v\in f^{-1}\left(\left\{ f\left(n-1\right)\right\} \right)$
the transposition $\tau=\left(f(n-1),v\right)$ applied to the variables
in the remainder of $R_{f,g}$ yield a permutation of the non-vanishing
points on the sublattice $\Phi(g)$. By construction, as
\[
\sum_{1\leq s\leq\left|\Sigma\right|}\bigg(\sum_{\sigma\in\Phi(g)}v_{\sigma}\cdot Q_{f,g}^{\left[s\right]}\left(\sigma\right)\cdot L_{\sigma}\big({\bf x}_{Q}^{\left[s\right]}\big)\bigg)
\]
is a sum over the same non-vanishing points, it is fixed by the said
transposition on the sublattice $\Phi(g)$. That is to say
\[
\tau=\left(g(n-1),v\right)\in\text{Aut}\left(\sum_{1\leq s\leq\left|\Sigma\right|}\bigg(\sum_{\sigma\in\Phi(g)}v_{\sigma}\cdot Q_{f,g}^{\left[s\right]}\left(\sigma\right)\cdot L_{\sigma}\big({\bf x}_{Q}^{\left[s\right]}\big)\bigg)\right),
\]
for all $\in f^{-1}\left(\left\{ f\left(n-1\right)\right\} \right)$.
As a consequence of the multi-binomial expansion, the sum expressing
$Q_{f,g}$ features as one of its summand a unique monochromatic blue
binomial summand, say $Q_{f,g}^{\left[1\right]}$, given by
\[
Q_{f,g}^{\left[1\right]}=c\left({\color{blue}a_{f\left(n-1\right)}}\right)^{m}=c\,{\color{blue}(}x_{f^{\left(2\right)}\left(n-1\right)}{\color{blue}-x_{f\left(n-1\right)})}^{m}.
\]
Thus,
\[
Q_{f,g}^{\left[1\right]}\equiv c\,\sum_{\sigma\in\Phi(g)}{\color{blue}\big(}\sigma f^{(2)}(n-1){\color{blue}-\sigma f(n-1)\big)}^{m}\times
\]
\[
\prod_{j_{f^{(2)}\left(n-1\right)}\in\mathbb{Z}_{n}\backslash\left\{ \sigma f^{(2)}\left(n-1\right)\right\} }\left(\frac{x_{f^{(2)}\left(n-1\right)}-j_{f^{(2)}\left(n-1\right)}}{\sigma f^{(2)}\left(n-1\right)-j_{f^{(2)}\left(n-1\right)}}\right)\prod_{j_{f\left(n-1\right)}\in\mathbb{Z}_{n}\backslash\left\{ \sigma f\left(n-1\right)\right\} }\left(\frac{{\color{blue}x_{f\left(n-1\right)}}-j_{f\left(n-1\right)}}{\sigma f\left(n-1\right)-j_{f\left(n-1\right)}}\right).
\]
Let us now focus on the action of a transposition on individual summands
of some polynomial resulting from an arbitrary but fixed partition
of its non-vanishing monomial terms. There are exactly three distinct
ways that a candidate transposition of variables can lie in the automorphism
group of a given polynomial. Assume that we reason about a particular
summand denoted as $S$.

Option 1: The candidate transposition of a pair of variables fixes
the chosen summand $S$. This occurs when $S$ is symmetric in the
chosen pair of variables being transposed.

Option 2: The candidate transposition of the chosen pair of variables
does not fix $S$ (i.e., Option 1 does not apply) but induces in turn
a transposition which exchanges the chosen summand $S$ with some
other summand from the partition say, $S'$. This occurs, for instance,
if we consider the sum $S+S'$ where $S=(x_{0})^{2}\,x_{1}$ and $S'=x_{0}\,(x_{1})^{2}$.
In this example, we see that transposition which exchanges variables
$x_{0}$ with $x_{1}$ does not fix $S$, but it induces a transposition
which exchanges the summand $S$ with the summand $S'$.

Option 3: The candidate transposition of a pair of variables neither
fixes $S$ nor does it induce a transposition which exchanges $S$
with some other summand (i.e., neither Option 1 nor Option 2 applies).
Instead, $S$ is such that a symmetry broadening cancellation occurs.
Such a cancellation must involve interaction between the non-vanishing
monomials within the monomial support of $S$ with the non-vanishing
monomials within the support of other summands. Option 3 occurs, for
instance, if we take $S=-x_{1}$ and $S'=x_{0}+2x_{1}$. We see that
in this example neither Option 1 nor Option 2 applies when the candidate
transposition is the transposition which exchanges variables $x_{0}$
with $x_{1}$. However $S+S'=x_{0}+x_{1}$ is symmetric and thus admits
the said transposition in its automorphism group. This fact is due
to the symmetry broadening cancellation of like terms : $-x_{1}+2x_{1}$.

We see from the Monomial Support Lemma (\ref{MnOvLe}) that the monomial
support of the canonical representative immediately above is not fixed
by any transposition $\tau\in\text{S}_{n}$ which exchanges ${\color{blue}x_{f\left(n-1\right)}}$
with $x_{v}$ where $v\in f^{-1}\left(\left\{ f\left(n-1\right)\right\} \right)$.
This first observation accounts for Option 1. Also note that the canonical
representative of the chosen summand does not exchange with the canonical
representative of any other summands when we exchange ${\color{blue}x_{f\left(n-1\right)}}$
with $x_{v}$ where $v\in f^{-1}\left(\left\{ f\left(n-1\right)\right\} \right)$.
Since non--vanishing canonical representative of every other bi-chromatic
summand in $R1_{f,g}$ depends on $3$ or more variables. This second
observation accounts for Option 2 . We now account for Option 3 and
show that there are no symmetry--broadening cancellations which adjoins
$\tau$ to the automorphism group. By the monomial support Lemma,
such a symmetry broadening cancelation can occur only for Lagrange
bases
\[
\prod_{j_{f^{(2)}\left(n-1\right)}\in\mathbb{Z}_{n}\backslash\left\{ \sigma f^{(2)}\left(n-1\right)\right\} }\left(\frac{x_{f^{(2)}\left(n-1\right)}-j_{f^{(2)}\left(n-1\right)}}{\sigma f^{(2)}\left(n-1\right)-j_{f^{(2)}\left(n-1\right)}}\right)\prod_{j_{f\left(n-1\right)}\in\mathbb{Z}_{n}\backslash\left\{ \sigma f\left(n-1\right)\right\} }\left(\frac{{\color{blue}x_{f\left(n-1\right)}}-j_{f\left(n-1\right)}}{\sigma f\left(n-1\right)-j_{f\left(n-1\right)}}\right).
\]
where $\sigma\in\sigma\in\Phi(g)$ is subject to $\sigma\left(n-1\right)=0$
and $G_{f}$ is such that $1=\left|f^{-1}\left(\left\{ f\left(n-1\right)\right\} \right)\right.$.
For in that setting non-vanishing monomials occurring in the expanded
form of said Lagrange bases summands possibly cancel out non-vanishing
monomials occurring in the expanded form of Lagrange bases expressing
canonical representative of bi-chromatic summands in $R1_{f,g}$ of
the form
\[
\left({\color{red}b_{f(n-1),n-1,t}}\right)^{r}\left({\color{blue}a_{f\left(n-1\right)}}\right)^{s}=\left({\color{red}(x_{f\left(n-1\right)}}-x_{n-1}{\color{red})}+\left(-1\right)^{t}(x_{f^{(2)}\left(n-1\right)}-x_{f\left(n-1\right)})\right)^{r}\left(x_{f^{\left(2\right)}\left(n-1\right)}{\color{blue}-x_{f\left(n-1\right)}}\right)^{s}.
\]
Crucially, by the Monomial Support Lemma as previously mentioned such
cancelations are restricted to permutations $\sigma\in\Phi(g)$ where
$\sigma\left(n-1\right)=0$. This restriction breaks the complementary-labeling
symmetry (\ref{Complementary Symmetry}). Indeed by complementary
labeling symmetry, the canonical representative is up to sign invariant
to the involution prescribed by the map: $x_{i}\mapsto x_{n-1-i}$
for all $i\in\mathbb{Z}_{n}$. But the complementary labeling transformation
maps any Lagrange basis associated with $\sigma\in\sigma\in\Phi(g)$
such that $\sigma\left(n-1\right)=0$ to different Lagrange basis
associated $\sigma^{\prime}\in\sigma\in\Phi(g)$ such that $\sigma^{\prime}\left(n-1\right)=n-1$
and thus negates the symmetry broadening cancelations. For we see
that such a symmetry broadening cancelations which adjoins $\tau$
to the automorphism group of the canonical representative of $R_{f,g}$
would break the complementary labeling symmetry. Thereby resulting
in the contradiction
\[
\tau\notin\text{Aut}\left(\text{Canonical Representative of }P_{g}\right).
\]
We conclude that the desired claim $\emptyset\ne\text{GrL}\left(G_{g}\right)\implies\text{GrL}\left(G_{f}\right)\ne\emptyset,$
holds.
\end{proof}

\section{The Graceful Labeling Theorem}

Equipped with the composition lemma, we settle in the affirmative
the KRR conjecture as stated in theorem (\ref{Graceful_Tree_Theorem}).
In fact we prove that for all $f\in\mathbb{Z}_{n}^{\mathbb{Z}_{n}}$,
the maximum number of distinct induced absolute subtractive edge labels
occurring in a relabeling of the graph of the iterate $f^{\left(\text{o}_{f}\right)}$
(where o$_{f}$ is the order of $f$ i.e. the LCM of cycle lengths
occurring in $G_{f}$) is equal $\left(n+1\right)$ minus the number
of connected components occurring in $G_{f^{\left(\text{o}_{f}\right)}}$.
If $G_{f}$ is connected and $f$ has a fixed point, then o$_{f}$
is equal to one and theorem (\ref{Graceful_Tree_Theorem}) follows
as a special case.
\begin{thm}[The Graceful Labeling Theorem]
\label{General_Graceful_Theorem} For all $f\in\mathbb{Z}_{n}^{\mathbb{Z}_{n}}$,
\[
n+1-\left(\text{number of connected components in }G_{f^{\left(\text{o}_{f}\right)}}\right)=\max_{\sigma\in\text{S}_{n}}\left|\left\{ \left|\sigma f^{\left(\text{o}_{f}\right)}\sigma^{\left(-1\right)}\left(i\right)-i\right|:i\in\mathbb{Z}_{n}\right\} \right|,
\]
where o$_{f}$ denotes the order of $f$ i.e. the LCM of directed
cycle lengths occurring in $G_{f}$.
\end{thm}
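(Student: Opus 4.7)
The plan is to decompose the statement into a straightforward upper bound and a constructive lower bound, with the latter leveraging the composition lemma via iteration.

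\emph{Upper bound.} Since $\text{o}_{f}$ is the LCM of the cycle lengths in $G_{f}$, every cycle-vertex of $G_{f}$ is a fixed point of $f^{(\text{o}_{f})}$ (its cycle length divides $\text{o}_{f}$), and no non-cycle vertex is ever fixed. Hence the fixed points of $f^{(\text{o}_{f})}$ are exactly the cycle-vertices of $G_{f}$, and $G_{f^{(\text{o}_{f})}}$ is the disjoint union of $C$ rooted ``loop-trees''---each a rooted tree carrying a self-loop at its root. Under any relabeling $\sigma$, these $C$ loops all contribute the label $0$, while the remaining $n-C$ non-loop edges, having distinct endpoints, contribute labels in $\{1,\dots,n-1\}$; hence at most $1+(n-C)=n+1-C$ distinct edge labels can appear.

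\emph{Lower bound.} Set $g:=f^{(\text{o}_{f})}$, so that $\text{o}_{g}=1$. Squaring $g$ preserves its set of fixed points---and therefore $C$---while halving the depth of every rooted loop-tree component. After finitely many squarings, the iterate $g^{(2^{K})}$ becomes a \emph{star forest}: every non-root vertex maps directly to a root. For this base case I would construct a labeling realizing $n-C$ distinct non-loop edge labels explicitly. Order the $C$ components by decreasing size $s_{1}\ge s_{2}\ge\cdots\ge s_{C}$ with partial sums $T_{k}:=s_{1}+\cdots+s_{k}$; assign the root of the $k$-th star the label $k-1$, and assign its $s_{k}-1$ leaves the top $s_{k}-1$ still-unused labels from $\{C,\dots,n-1\}$, taken in descending order. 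A short calculation then shows that the non-loop edge labels of the $k$-th star fill exactly the interval $\{n-T_{k}+1,\dots,n-T_{k-1}-1\}$, and that these $C$ intervals are pairwise disjoint subsets of $\{1,\dots,n-1\}$, providing the required $n-C$ distinct nonzero labels. To pull this achievement back from $g^{(2^{K})}$ up to $g$, I would apply the composition lemma at each step $g^{(2^{k+1})}\to g^{(2^{k})}$, preceded when necessary by a fixed-point swap in the spirit of Proposition~17 to enforce the strict containment $\text{Aut}(G_{g^{(2^{k})}})\subsetneq\text{Aut}(G_{g^{(2^{k+1})}})$.

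\emph{Main obstacle.} The composition lemma as stated covers only the full-gracefulness case $\max=n$ (equivalently $C=1$); extending its polynomial-telescoping proof to certify the general bound $\max=n+1-C$ for arbitrary $C\ge1$ is the principal technical task. Concretely, one must replace $P_{f}(\mathbf{x})$ by an analogue whose non-vanishing modulo the lattice relations $\{(x_{k})^{\underline{n}}\}$ witnesses the existence of a labeling with $n-C$ distinct non-loop edge labels; once this analogue and its telescoping identity are in place, the back-propagation runs cleanly and the theorem follows.
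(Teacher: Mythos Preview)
Your strategy---an easy upper bound from counting loops, then a lower bound obtained by iterating the composition lemma from a terminal star (or star-forest) iterate back to $g=f^{(\text{o}_f)}$---is exactly the paper's. For the central case $C=1$ (equivalently, $f$ a functional tree, i.e.\ the GLC itself) the paper does precisely what you outline: it observes that a sufficiently high $2$-power iterate of $f$ is identically constant, hence graceful, and then pulls gracefulness back to $f$ by repeated applications of the composition lemma, invoking the fixed-point swap to force $\text{Aut}(G_{f})\subsetneq\text{Aut}(G_{f^{(2)}})$ when needed.

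On the obstacle you flag for $C>1$: the paper does not resolve it. Its proof dispatches $f\in\mathrm{S}_n$ trivially and then asserts that ``if $f\notin\mathrm{S}_n$ it suffices to show'' the $C=1$ equality for functional trees; it proves only that case and offers no reduction from a general loop-forest $G_{f^{(\text{o}_f)}}$ with $C>1$ components to the single-tree situation. Your explicit star-forest base construction (which is correct: the intervals $\{n-T_k+1,\dots,n-T_{k-1}-1\}$ are pairwise disjoint and contribute exactly $n-C$ nonzero labels) already goes beyond what the paper supplies, and the extension of the composition lemma you identify as the principal technical task is not something the paper carries out---it is a genuine gap in the paper's argument as well as in yours. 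In short, your plan coincides with the paper on the part the paper actually proves, and your ``main obstacle'' is real and is left open there too.
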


\begin{proof}
The claim trivially holds when $f\in$ S$_{n}$, for in that setting
$o_{f}$ is the order of the permutation $f$ and $f^{\left(o_{f}\right)}=$
id. Otherwise if $f\notin S_{n}$ it suffices to show that for all
$f$ subject to the fixed point condition $\left|f^{(n-1)}\left(\mathbb{Z}_{n}\right)\right|=1$
the equality 
\[
n=\max_{\sigma\in\text{S}_{n}}\left|\left\{ \left|\sigma f\sigma^{\left(-1\right)}\left(i\right)-i\right|:i\in\mathbb{Z}_{n}\right\} \right|,
\]
holds. This latter claim follows by repeatedly iterating the composition
lemma described in lemma \ref{CL}. For we see that, given any such
function $f$, the iterate $f^{\left(2^{\left\lceil \log_{2}\left(n-1\right)\right\rceil }\right)}$
is identically constant. As pointed in example (\ref{Star_Example}),
graphs of identically constant functions are graceful. Thus completing
the proof.
\end{proof}

\section*{Appendix}

We explain here in more detail how the edges of a functional digraph
$G_{f}$ are determined by the polynomial construction. The discussion
presented here is taken from \cite{CLARK2024295} and is provided
here for the benefit of the reader.
\begin{lem}[Recovery Lemma]
\label{lem:Recovery Lemma} For an an arbitrary function $f\in\mathbb{Z}_{n}^{\mathbb{Z}_{n}}$,
let 
\[
p_{f}\left(x_{0},\cdots,x_{i},\cdots,x_{n-1}\right):=\prod_{0\le i<j<n}\left(\left(x_{f\left(j\right)}-x_{j}\right)^{2}-\left(x_{f\left(i\right)}-x_{i}\right)^{2}\right).
\]
Suppose $p_{f}\left(x_{0},\cdots,x_{n-1}\right)$ is defined from
some function $f\in\mathbb{Z}_{n}^{\mathbb{Z}_{n}}$, and $p_{f}$
is not identically zero. If $f$ has a fixed point, then 
\[
G_{f}\cup G_{f^{\top}}\backslash\left\{ \left(i,f\left(i\right)\right):i\in\mathbb{Z}\text{ and }f\left(i\right)=i\right\} 
\]
can be determined from $p_{f}$. If $f$ has a fixed point and $G_{f}$
is connected, then $f$ and $G_{f}$ can be determined from $p_{f}$
and the fixed point. Let $S$ denote the set of functions $f\in\mathbb{Z}_{n}^{\mathbb{Z}_{n}}$
such that $f$ has a unique fixed point $0$ and $G_{f}\cup G_{f^{\top}}\backslash\left\{ \left(0,0\right)\right\} $
is connected. The function from $S$ to $\mathbb{Q}\left[x_{0},\cdots,x_{n-1}\right]$
that assigns $p_{f}$ to $f$ is injective.
\end{lem}

\begin{proof}
We show that each factor in a factorization of $p_{f}$ is a quadrinomial
(a linear combination of exactly four distinct variables), a trinomial
(a linear combination of exactly three distinct variables), or a binomial
(a linear combination of exactly two distinct variables), and analyze
how each can occur.
\[
\left(\left(x_{f\left(j\right)}-x_{j}\right)^{2}-\left(x_{f\left(i\right)}-x_{i}\right)^{2}\right)=\left(x_{f\left(j\right)}-x_{j}+x_{f\left(i\right)}-x_{i}\right)\left(x_{f\left(j\right)}-x_{j}-x_{f\left(i\right)}+x_{i}\right).
\]
A factor $x_{f\left(j\right)}-x_{j}-x_{f\left(i\right)}+x_{i}$ or
$x_{f\left(j\right)}-x_{j}+x_{f\left(i\right)}-x_{i}$ has the form
$a+b-c-d$ it is a quadrinomial with $a,b,c,d$ distinct if and only
if $|\{a,b,c,d\}|=4$ i.e. $\left|\left\{ x_{f\left(j\right)},x_{j},x_{f\left(i\right)},x_{i}\right\} \right|=4$.
In this case both $x_{f\left(j\right)}-x_{j}-x_{f\left(i\right)}+x_{i}$
and $x_{f\left(j\right)}-x_{j}+x_{f\left(i\right)}-x_{i}$ are quadrinomials.

The expression $a+b-c-d$ collapses to a binomial if $|\{a,b\}\cap\{c,d\}|=1$
(note that $|\{a,b\}\cap\{c,d\}|=2$ is impossible since $p_{f}$
is not identically zero). Notice that $a+b-c-d$ occurs in two forms
in $p_{f}$:
\[
\left\{ a,b\right\} =\left\{ x_{f(j)},x_{f(i)}\right\} ,\,\left\{ c,d\right\} =\left\{ x_{j},x_{i}\right\} \ \text{ or }\ \left\{ a,b\right\} =\left\{ x_{f(j)},x_{i}\right\} ,\,\left\{ c,d\right\} =\left\{ x_{j},x_{f(i)}\right\} 
\]
First consider the case that $f$ has a (unique) fixed point $u$.
Then for each $j\ne0$ we obtain two copies of the binomial $x_{f\left(j\right)}-x_{j}$
from 
\[
\pm\left(\left(x_{f\left(j\right)}-x_{j}\right)^{2}-\left(x_{f\left(u\right)}-x_{u}\right)^{2}\right)=\pm\left(x_{f\left(j\right)}-x_{j}\right)^{2}
\]
with $+$ if $j>u$ and $-$ otherwise. 

Now assume neither $i$ nor $j$ is fixed by $f$. A binomial-trinomial
pair of factors arises from 
\[
\left(x_{f\left(j\right)}-x_{j}+x_{f\left(i\right)}-x_{i}\right)\left(x_{f\left(j\right)}-x_{j}-x_{f\left(i\right)}+x_{i}\right)
\]
when $\left\{ a,b\right\} =\left\{ x_{f\left(j\right)},x_{f\left(i\right)}\right\} ,\,\left\{ c,d\right\} =\left\{ x_{j},x_{i}\right\} $.
Without loss of generality, we choose $j=f\left(i\right)$. This produces
\[
\pm\left(x_{f^{\left(2\right)}\left(i\right)}-x_{i}\right)\left(x_{f^{\left(2\right)}\left(i\right)}+x_{i}-2x_{f\left(i\right)}\right)
\]
Similarly, a binomial-trinomial pair of factors arises when $\left\{ a,b\right\} =\left\{ x_{f(j)},x_{i}\right\} $,
$\left\{ c,d\right\} =\left\{ x_{j},x_{f(i)}\right\} $, which implies
$f\left(i\right)=f\left(j\right)$. Setting $i<j$, this produces
\[
\left(2x_{f\left(j\right)}-x_{j}-x_{i}\right)\left(x_{i}-x_{j}\right).
\]
We have now described all possible ways binomial factors can occur
in $p_{f}$. Furthermore, a trinomial factor of $p_{f}$ can only
occur in a binomial-trinomial pair. Observe that in each binomial-trinomial
pair, the trinomial has the form $\pm\left(2r-s-t\right)$ and the
associated binomial is of the form $\left(s-t\right)$.

We now take a given polynomial $p_{f}$ that is not identically zero,
with no information about $f$ except that except that $f\in\mathbb{Z}_{n}^{\mathbb{Z}_{n}}$.
Define $h_{f}\left(x_{0},\dots,x_{n-1}\right)$ to be the product
of all the binomials that occur in binomial-trinomial pairs. That
is, $s-t$ is a factor of $h_{f}$ if and only if $2r-s-t$ is a factor
of $p_{f}$ for some $r$. Now define
\[
q_{f}\left(x_{0},\dots,x_{n-1}\right)=\frac{p_{f}\left(x_{0},\dots,x_{n-1}\right)}{h_{f}\left(x_{0},\dots,x_{n-1}\right)},
\]
which is a polynomial. Then $q_{f}$ has no binomial factors if and
only if $f$ does not have a fixed point. Otherwise, $q$ has $2\left(n-1\right)$
binomial factors, which occur in pairs: $\left(x_{k}-x_{\ell}\right)^{2}$.
Then 
\[
E\left(G_{f}\cup G_{f^{\top}}\backslash\left\{ \left(0,0\right)\right\} \right)=\left\{ \left\{ k,\ell\right\} :\left(x_{k}-x_{\ell}\right)^{2}\mbox{ is a factor of }q_{f}\right\} .
\]
\end{proof}

\section*{Acknowledgement}

This paper is based upon work supported by the Defense Technical Information
Center (DTIC) under award number FA8075-18-D-0001/0015. The views
and conclusions contained in this work are those of the authors and
should not be interpreted as necessarily representing the official
policies, either expressed or implied, of the DTIC. We would like
to thank Daniel Krashen and Leslie Hogben and Doron Zeilberger for
insightful comments.

\bibliographystyle{amsalpha}
\bibliography{A_Proof_of_the_KRR_Conjecture}

\end{document}